\newtheorem*{TA}{Theorem A}
\title{On the long time behavior of solutions to the Intermediate Long Wave equation}
\author{Claudio Mu\~noz}
\author{Gustavo Ponce}
\author{Jean-Claude Saut}
\thanks{CM was partially supported by Fondecyt no. 1191412, Ecos-Sud C18E06 and Conicyt PIA AFB170001. J.-C. S. is partially supported by the ANR Project ANuI}
\address{CNRS and Departamento de Ingenier\'ia Matem\'atica DIM-CMM UMI 2807-CNRS \\ Universidad de Chile, Santiago, Chile}
\email{cmunoz@dim.uchile.cl}
\address{Laboratoire de Mathématiques, UMR 8628 Université Paris-Sud et CNRS, 91405 Orsay, France}
\email{jean-claude.saut@u-psud.fr}
\address{Department  of Mathematics, 
University of California, Santa Barbara, CA 93106. USA}
\email{ponce@math.ucsb.edu}
\date{\today}
\subjclass[2000]{Primary 37K15, 35Q53; Secondary 35Q51, 37K10}
\keywords{Intermediate long wave equation, decay estimates, virial}
\chardef\bslash=`\\ 
\newtheorem{thm}{Theorem}[section]
\newtheorem{cor}[thm]{Corollary}
\newtheorem{lem}[thm]{Lemma}
\theoremstyle{remark}
\newtheorem{rem}{Remark}[section]
\numberwithin{equation}{section}
\newcommand{\R}{\mathbb{R}}
\newcommand{\Z}{\mathbb{Z}}
\newcommand{\la}{\lambda}
\newcommand{\sgn}{\operatorname{sgn}}
\newcommand{\be}{\begin{equation}}
\newcommand{\ee}{\end{equation}}
\newcommand{\bp}{\begin{proof}}
\newcommand{\ep}{\end{proof}}
\newcommand{\bel}{\begin{equation}\label}
\newcommand{\eeq}{\end{equation}}
\newcommand{\bea}{\begin{eqnarray}}
\newcommand{\eea}{\end{eqnarray}}
\newcommand{\bee}{\begin{eqnarray*}}
\newcommand{\eee}{\end{eqnarray*}}
\newcommand{\ben}{\begin{enumerate}}
\newcommand{\een}{\end{enumerate}}
 \providecommand{\abs}[1]{\lvert#1 \rvert}
\newcommand{\ve}{\varepsilon}
\newcommand{\eval}[2][\right]{\relax
  \ifx#1\right\relax \left.\fi#2#1\rvert}
\let\abs=\envert
\begin{document}
\begin{abstract}
We show that the limit infimum, as time $\,t\,$  goes to infinity, of any uniformly bounded in time $H^{3/2+}\cap L^1$ solution to the Intermediate Long Wave equation converge to zero locally in an increasing-in-time region of space of order $\,t/\log(t)$. Also, for solutions with a mild $L^1$-norm growth in time is established that its limit infimum converge to zero, as time goes to infinity. This confirms the non existence of breathers and other solutions for the ILW model moving with a speed \lq\lq slower" than a soliton. We also prove that in the far field linearly dominated region, the $L^2$ norm of the solution also converges to zero as time approaches infinity.

In addition, we deduced several scenarios for which  the initial value problem associated to the generalized Benjamin-Ono and the generalized Intermediate Long Wave equations cannot possess time periodic solutions (breathers).

Finally, as it was previously demonstrated in solutions of the KdV and BO equations,  we establish the following propagation of regularity result : if the datum $u_0\in H^{3/2+}(\R)\cap H^m((x_0,\infty))$, for some $\;x_0\in\R,\,m\in Z^+,\,m\geq 2$, then the corresponding solution $u(\cdot,t)$ of the Intermediate Long Wave equation  belongs to $H^m(\beta,\infty)$, for any $t>0$ and  $\beta\in\R$.

\end{abstract}
\maketitle \markboth{ILW smoothing and decay estimates}{C. Mu\~noz, G. Ponce and J.-C. Saut}
\renewcommand{\sectionmark}[1]{}

\section{Introduction and main results}


We consider the Intermediate Long Wave (ILW) equation in 1D for $\delta>0$,
\be\label{ILW} 
\begin{aligned}
 \partial_t u + \mathcal T_\delta \partial_x^2  u + \frac1{\delta}\partial_x u  + u\partial_x u = 0,\qquad (t,x) \in & ~ \R\times \R.
\end{aligned}
\ee
Here and along this paper $u=u(t,x) \in \R$ is a real-valued function, and
\be\label{T}
\mathcal T_\delta f(x) := -\, \frac1{2\delta} \,\hbox{p.v.} \int \coth\left(\frac{\pi(x-y)}{2\delta}\right)f(y)dy.
\ee
Note that $\mathcal T_\delta$ is a order zero Fourier multiplier, in the sense that $\partial_x\mathcal T_\delta$ is the multiplier with symbol
\be\label{1.3} 
\sigma(\partial_x\mathcal T_\delta)=\widehat{\partial_x\mathcal T_\delta} = - 2\pi \xi \,\hbox{coth}\,(2\pi \delta \xi).
\ee
The equation ILW \eqref{ILW} describes the long internal gravity waves in a two layers stratified fluid, the lower layer having a large finite depth represented by the parameter $\,\delta$, see 
 \cite{Jo}, \cite{KKD}, \cite{JE}, \cite{CL}, \cite{SAK} for a formal derivation and \cite{BLS, CGK} for a rigorous derivation in the sense of consistency.
 
 \medskip
 
In \cite{ABFS} it was proven that solutions of the ILW  as $\delta \to \infty$ (infinite depth limit) converge to solutions of the Benjamin-Ono (BO) equation, see \cite{Be}, \cite{On}
 \be\label{BO} 
\partial_t u + \mathcal H\partial_x^2  u   + u\partial_x u = 0,
\ee
with the same initial data, here $\mathcal H$ stands for the Hilbert transform
\begin{equation}
\label{hita}
\begin{aligned}
\mathcal H f(x) :=&~ \frac{1}{\pi} {\rm p.v.}\Big(\frac{1}{x}\ast f\Big)(x)
\\
:=& ~ \frac{1}{\pi}\lim_{\epsilon\downarrow 0}\int\limits_{|y|\ge \epsilon} \frac{f(x-y)}{y}\,dy=(-i\,\sgn(\xi) \widehat{f}(\xi))^{\vee}(x).
\end{aligned}
\end{equation}

Also, in \cite{ABFS} it was shown that  if $u_{\delta}(t,x)$ denotes the solution of the ILW equation \eqref{ILW}, then
\be
\label{scaleKdV}
v_{\delta}(t,x)=\,\frac{3}{\delta} \,u_{\delta}\left(x,\frac{3}{\delta} t\right)
\ee
converges  as $\delta\to 0$ (shallow-water limit) to the solution  of the Korteweg-de Vries (KdV) equation \cite{KdV}
\be\label{KdV} 
\partial_t u + \partial_x^3  u  + u\partial_x u = 0,
\ee
with the same initial data.

\medskip

The ILW equation has been proven to be complete integrable.  In fact the Inverse Scattering formalism was given in \cite{KAS, KSA}, where one finds in particular the Lax pair of the ILW equation, but  no rigorous theory for solving the Cauchy problem by this method is known (see however  \cite{KPW} for recent progress on the direct scattering problem). For further comments on general properties of the ILW equation we refer to \cite{JE}, \cite{KAS}, \cite{KKD}, \cite{SAF}, and a recent survey \cite{Sa}.

\medskip
Note also that the ILW equation is obtained as a one-dimensional,  uni-directional reduction of a class of ILW {\it systems} derived in \cite{BLS, CGK}. Long time existence results for those systems were obtained in \cite{X} but the global existence of even small solutions is still unknown to our knowledge.

\medskip
Formally,  real solutions of \eqref{ILW} satisfy  infinitely many conservation laws, due to its integrability, (see {\it eg} \cite{LR}, \cite{Mat3}) .  The first three are the following  ones, and are only due to the hamiltonian nature of the ILW equation.
\begin{equation}
\label{CL}
\begin{aligned}
I_1(u)&:=\int u(t,x)dx=I_1(u_0),\quad \;\;\;I_2(u):=\int u^2(t,x)dx=I_2(u_0),\\
I_3(u)&:=\int\left( u\mathcal T_\delta \partial_x u +\frac1{\delta} u^2 -\frac13u^3\right)(t,x)dx=I_3(u_0)\\
&~ \qquad  \text{(conservation of the Hamiltonian)}.
\end{aligned}
\end{equation}

The first non-trivial invariant controls the $H^1$ norm of the solution and is given by

\begin{equation}\begin{split} 
I_4(u)=   &\int_{-\infty}^\infty \Bigg( \frac{1}{4}u^4+\frac{3}{2}u^2\mathcal T_\delta(u_x)+\frac{1}{2}u_x^2+\frac{3}{2}[\mathcal T_\delta(u_x)]^2\\
&\qquad \quad +\frac{1}{\delta} \left( \frac{3}{2}u^3+\frac{9}{2}u\mathcal T_\delta(u_x) \right) +\frac{3}{2\delta^2} \Bigg)dx.
\end{split}
\end{equation}

The next invariant controls the $H^{3/2}$ norm of the solution. The general structure of the invariants is given for instance in \cite{ABFS}.

The  global well-posedness of the initial value problem (IVP) associated to the ILW equation \eqref{ILW} was studied in \cite{ABFS} where the following result was established:

\begin{TA}\label{WP} \cite{ABFS}
For any initial data $u_0\in H^s(\R),\,s>3/2$, for any $\,T>0$ and any $\delta>0$ the corresponding IVP associated to the ILP equation \eqref{ILW} has a unique solution
\begin{equation}\label{clase}
u\in C([0,T]:H^s(\R))\cap C^1([0,T]:H^{s-2}(\R))\cap L^{\infty}([0,T]:H^{s+1/2}_{loc}(\R)).
\end{equation}

Moreover, the map data $\to$ solution is locally continuous from $H^s(\R)$ to the class defined in \eqref{clase} and the $H^n(\R), n=0,\frac{1}{2}, 1, \frac{3}{2}$ norms of $u(\cdot,t)$ are uniformly bounded in time.
\end{TA}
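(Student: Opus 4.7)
The plan is to establish local existence and uniqueness for $u_0\in H^s(\mathbb R)$, $s>3/2$, via parabolic regularization with uniform-in-$\varepsilon$ $H^s$ energy estimates, extract a limit by compactness, and then globalize using the conservation laws of \eqref{CL} together with the higher invariants controlling $\|u\|_{H^1}$ and $\|u\|_{H^{3/2}}$.

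For each $\varepsilon>0$, I would consider the regularized problem
\[
\partial_t u^\varepsilon + \mathcal T_\delta \partial_x^2 u^\varepsilon + \tfrac{1}{\delta}\partial_x u^\varepsilon + u^\varepsilon\partial_x u^\varepsilon = \varepsilon\,\partial_x^2 u^\varepsilon,\qquad u^\varepsilon(0)=u_0,
\]
solved locally in $H^s$ by a fixed-point argument built on the heat semigroup. Setting $J^s=(1-\partial_x^2)^{s/2}$, applying $J^s$ and pairing with $J^s u^\varepsilon$ in $L^2$, the dispersive terms $\mathcal T_\delta\partial_x^2$ and $\tfrac{1}{\delta}\partial_x$ are skew-adjoint (their symbols, including \eqref{1.3}, are real and odd), so the viscous dissipation is nonnegative and only the nonlinear term survives; a Kato-Ponce commutator estimate yields
\[
\bigl|\langle J^s(u^\varepsilon\partial_x u^\varepsilon),J^s u^\varepsilon\rangle\bigr|\lesssim \|\partial_x u^\varepsilon\|_{L^\infty}\|u^\varepsilon\|_{H^s}^2.
\]
Since $s>3/2$, Sobolev embedding controls $\|\partial_x u^\varepsilon\|_{L^\infty}$ by $\|u^\varepsilon\|_{H^s}$; Gronwall gives a uniform bound on $[0,T]$ with $T=T(\|u_0\|_{H^s})$. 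A Bona-Smith argument (regularizing the data, using the uniform bound, and an $L^2$ energy estimate on differences) gives uniqueness and strong continuity $u\in C([0,T];H^s)$, and the equation supplies $\partial_t u\in C([0,T];H^{s-2})$.

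To globalize, I would exploit the invariants in \eqref{CL}: $I_2$ controls $\|u\|_{L^2}$, $I_3$ together with the $L^2$-boundedness of $\mathcal T_\delta$ and Gagliardo-Nirenberg controls $\|u\|_{H^{1/2}}$, $I_4$ bounds $\|u\|_{H^1}$ after absorbing the cubic and $\mathcal T_\delta$-type lower-order contributions by Hölder and Gagliardo-Nirenberg, and the next invariant bounds $\|u\|_{H^{3/2}}$. Together these provide uniform-in-time control of $\|\partial_x u\|_{L^\infty}$, so the local Gronwall argument on $\|u\|_{H^s}$ extends to any $T>0$ for $s>3/2$.

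The local gain of $1/2$-derivative in $L^\infty_T H^{s+1/2}_{\mathrm{loc}}$ is a Kato-type smoothing effect. I would apply $J^s$ to the equation, multiply by $\phi\,J^s u$ for a nondecreasing weight $\phi$ with $\phi'$ of compact support, and integrate in $x$ and $t$. Decomposing $\mathcal T_\delta=\mathcal H+(\mathcal T_\delta-\mathcal H)$, the Hilbert-transform piece reproduces the classical BO smoothing identity, producing the positive contribution $\int_0^T\|(\phi')^{1/2}J^{s+1/2}u\|_{L^2}^2\,dt$; the residual $\mathcal T_\delta-\mathcal H$ has Fourier symbol $-2\pi\xi(\coth(2\pi\delta\xi)-\sgn(\xi))$ which decays exponentially at infinity, hence acts as an infinitely smoothing operator whose weighted commutator is lower order. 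I expect the main technical obstacle to be precisely this weighted commutator analysis for $\mathcal T_\delta$: because $\coth(2\pi\delta\xi)$ agrees with $\sgn(\xi)$ only up to exponential tails, the BO proof does not apply verbatim, but the above decomposition reduces the smoothing mechanism to the Hilbert-transform case plus a manifestly regularizing remainder.
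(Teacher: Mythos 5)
The paper does not prove this statement itself; it is quoted from \cite{ABFS}, and the accompanying remark records the three ingredients: the local $H^s$, $s>3/2$, theory, the uniform $H^{3/2}$ bound coming from the conserved quantities, and \emph{an interpolation argument}. Your parabolic regularization, Kato--Ponce energy estimate, Bona--Smith uniqueness/continuity, the use of $I_2,I_3,I_4$ and the next invariant to bound $\|u\|_{H^n}$ for $n=0,\tfrac12,1,\tfrac32$, and the Kato smoothing identity with the decomposition $\mathcal T_\delta=\mathcal H+(\mathcal T_\delta-\mathcal H)$ (the remainder having exponentially decaying symbol, hence infinitely smoothing --- exactly the device the paper uses in Section 4) are all consistent with that outline.

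The genuine gap is in your globalization step: you assert that the uniform bounds on $\|u\|_{H^n}$, $n\le 3/2$, ``provide uniform-in-time control of $\|\partial_x u\|_{L^\infty}$.'' They do not. The embedding $H^{1/2}(\R)\hookrightarrow L^\infty(\R)$ fails at the endpoint, so a uniform $H^{3/2}$ bound on $u$ gives $\partial_x u\in H^{1/2}$ but not $\partial_x u\in L^\infty$, and your Gronwall inequality $\frac{d}{dt}\|u\|_{H^s}^2\lesssim \|\partial_x u\|_{L^\infty}\|u\|_{H^s}^2$ cannot be closed as written. This is precisely the role of the ``interpolation argument'' flagged in the paper's remark: one needs a Brezis--Gallou\"et-type logarithmic inequality of the form
\begin{equation*}
\|\partial_x u\|_{L^\infty}\le c\,\|u\|_{H^{3/2}}\Bigl(1+\log\bigl(1+\|u\|_{H^s}\bigr)\Bigr)^{1/2},\qquad s>3/2,
\end{equation*}
(or an equivalent interpolation between the conserved $H^{3/2}$ norm and the $H^s$ norm), which turns the energy inequality into $y'\lesssim y\log(e+y)$ for $y=\|u\|_{H^s}^2$ and yields a finite (generally growing in $t$) bound on every $[0,T]$. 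Note that this is consistent with the statement of Theorem A, which claims uniform-in-time boundedness only of the $H^n$ norms for $n=0,\tfrac12,1,\tfrac32$, not of the $H^s$ norm; your argument, if it worked, would prove the stronger and unavailable uniform $H^s$ bound. With this correction the rest of your scheme goes through.
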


\begin{rem}
The proof uses the local $H^s(\R), s>3/2$ theory, the uniform $H^{3/2}(\R)$ bound (thus integrability) and an interpolation argument.
\end{rem}

\vskip.1in

Stronger results has been obtained in \cite{MoVe}, where (unconditional) local well-posedness was established in $\,H^s(\R)$ with $s\geq 1/2$. Due to the conservation of the hamiltonian, this result  implies the global well-posedness of ILW in the energy space $H^{1/2}(\R).$


\vspace{0.3cm}
It is well-known that  \eqref{ILW} possesses  soliton (or solitary wave) solutions of the form \cite{Jo}
\be\label{Soliton}
u(t,x)= Q_{\delta,c}(x-ct),  \quad c>\frac1{\delta},
\ee
where $Q_{\delta,c}$  solves
\begin{equation}
\label{solitons}
 \partial_x\mathcal T_\delta  Q_{\delta,c}  +\left( \frac1{\delta} -c\right) Q_{\delta,c}  + \frac12 Q_{\delta,c}^2=0.
\end{equation}
We remark  that $Q_{\delta,c}$ is exponentially localized in space contrary to the BO soliton, and it is given by the formula \cite{AT}
\be\label{Solsol}
Q_{\delta,c} (s):= \frac{b(c)\sin(a(c)\delta)}{\cos(a(c)\delta) + \cosh(a(c)s)}, \quad a,b \hbox{ depending on $c$.}
\ee
The uniqueness (up to translation) of the ILW soliton is proven in \cite{A,AT}. Because of its integrability, the ILW equation possesses also multi-solitons, see \cite{JE}.
\medskip

Let $C>0$ be an arbitrary constant and  $I_{b}(t)$ be the time-depending interval 
\be\label{Interval}
I_{b}(t):= \left( - \frac{C |t|^{b}}{\log |t|} , \frac{C |t|^{b}}{\log |t|} \right),  \quad |t|\geq 10,\quad b\in(0,1].
\ee

Our first result here is the following : 
 
\begin{thm}\label{TH1}
Let $u=u(t,x)$ be a solution to \eqref{ILW} such that
 \begin{equation}
 \label{m1}
 u\in C(\R:H^s(\R))\cap C^1([0,T]:H^{s-2}(\R))\cap L^{\infty}([0,T]:H^{s+1/2}_{loc}(\R)),\;\;\;s>3/2.
 \ee
 Assume that
 \be
 \label{hyp2}
 u\in L^\infty_{loc}(\R:L^1(\R))
\end{equation}
such that for some $a\in [0,\frac12)$ and $c=c(\delta)>0$
\be\label{Growth}
\sup_{t\in[0,T]} \int |u(t,x)|dx \leq c\langle T \rangle^a=c\,(1+T^2)^{a/2}.
\ee
Then for $\;b=1-a,$
\be\label{AS}
\liminf_{t\to \infty} \int_{|x|\leq \frac{C\,t^{b}}{\log(t)}}\;
(u^2+( q_{\delta}(\partial_x)u)^2)(t,x)dx=0.
\ee
where $q_{\delta}(\partial_x)$ is a Fourier multiplier with even  symbol $q_{\delta}(\xi)$, $q_{\delta}(\xi)>0$ for $\xi\neq 0$ with $q_{\delta}(0)=0$, $q_{\delta}'(0^+)=\delta^{-1/2}$, and  
\be\label{1234}
q_{\delta}(\xi)=\frac{\xi}{\delta^{1/2}}+O\Big(\frac{\xi^3}{\delta^{5/2}}\Big)\;\;\text{as}\;\;\xi\downarrow 0,\;\;\;q_{\delta}(\xi)=\sqrt{\xi}+O\Big(\frac{1}{\delta{\xi}^{1/2}}\Big)\;
\;\text{as}\;\;\xi\uparrow \infty.
\ee
Therefore, no soliton nor breather solution exists for ILW inside the region $I(t)$, for any time $t$ sufficiently large.
\end{thm}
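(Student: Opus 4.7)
The plan is to prove \eqref{AS} via a virial (Morawetz-type) identity with a time-dependent weight at scale $\lambda(t)=Ct^{b}/\log t$, $b=1-a$. Fix a smooth bounded increasing $\phi:\R\to\R$ with $\phi'\ge 0$ localized (e.g.\ $\phi(y)=\int_{-\infty}^{y}\psi(z)\,dz$ for a nonnegative bump $\psi$), and set
\[
\mathcal{I}(t):=\int_{\R}\phi\!\left(\frac{x}{\lambda(t)}\right)u^2(t,x)\,dx.
\]
By Theorem A, $\|u(t)\|_{L^2}^2$ is conserved, hence $|\mathcal{I}(t)|\le \|\phi\|_{\infty}\|u_0\|_{L^2}^2$ uniformly in $t$.

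Differentiate $\mathcal{I}$ in $t$ via \eqref{ILW}. Letting $L_\delta:=-\partial_x\mathcal{T}_\delta$ (a nonnegative Fourier multiplier of symbol $2\pi|\xi|\coth(2\pi\delta|\xi|)$, by \eqref{1.3}), using the skew-adjointness of $\mathcal{T}_\delta$, the commutation of Fourier multipliers with $\partial_x$, and the symmetrization
\[
\int \phi u_x L_\delta u\,dx = -\frac{1}{2}\int\phi_x u L_\delta u\,dx + \frac{1}{2}\int u[L_\delta,\phi]u_x\,dx,
\]
combined with the contribution $\frac{1}{\delta\lambda}\int \phi'(x/\lambda) u^2\,dx$ from the transport term $\frac{1}{\delta}u_x$, one obtains
\[
\frac{d\mathcal{I}}{dt}=-\frac{1}{\lambda(t)}\int\phi'\!\left(\frac{x}{\lambda}\right)u\,\mathcal{M}_\delta u\,dx+\mathcal{R}(t),
\]
where $\mathcal{M}_\delta:=L_\delta-\frac{1}{\delta}\ge 0$ is the ``purely dispersive'' multiplier whose symbol vanishes quadratically at $\xi=0$ and grows linearly at infinity, matching $q_\delta^2$ up to positive $\delta$-dependent constants consistent with \eqref{1234}; and $\mathcal{R}(t)$ collects the time-derivative error $-\frac{\lambda'}{\lambda}\int \frac{x}{\lambda}\phi'(\frac{x}{\lambda})u^2\,dx$, the commutator $-\int u[L_\delta,\phi(\cdot/\lambda)]u_x\,dx$, and the cubic term $\frac{2}{3\lambda}\int\phi'(\frac{x}{\lambda})u^3\,dx$. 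A sharp G\aa rding/Plancherel estimate yields the coercive lower bound
\[
\int\phi'\!\left(\frac{x}{\lambda}\right)u\,\mathcal{M}_\delta u\,dx \ge c\int\phi'\!\left(\frac{x}{\lambda}\right)(q_\delta u)^2\,dx - \text{(commutator)},
\]
and a companion virial on the primitive $w=\partial_x^{-1}u$ (globally bounded thanks to \eqref{Growth}) supplies the missing $\int\phi'(x/\lambda)u^2\,dx$ factor needed for the statement.

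Integrate over $[10,T]$. The boundary terms $\mathcal{I}(10)-\mathcal{I}(T)$ contribute $O(1)$; the time-derivative error is $O(\log T)$ since $|\lambda'/\lambda|\lesssim 1/t$; the $L_\delta$-commutator is of lower order (by symbol calculus, $\|[L_\delta,\phi(\cdot/\lambda)]\|_{L^2\to L^2}\lesssim 1/\lambda$, integrating to $O(\log T)$ against the uniform $H^{3/2}$ bound of Theorem~A); and the cubic term, controlled via $H^{1/2}\hookrightarrow L^3$ (uniform, by Theorem~A) together with \eqref{Growth} tuning an interpolation, can be kept strictly $o(\int_{10}^T dt/\lambda(t))$ thanks to $a<1/2$. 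Since $\int_{10}^T dt/\lambda(t)\sim \langle T\rangle^{a}\log T$ (or $(\log T)^2$ when $a=0$) diverges, dividing the integrated virial inequality by this quantity yields $\liminf_{t\to\infty}\int_{|x|\le \lambda(t)}(u^2+(q_\delta u)^2)(t,x)\,dx=0$, which is \eqref{AS}. Any soliton or breather would violate this (its localized norm stays strictly bounded below uniformly in time), proving their nonexistence in $I_b(t)$.

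The main obstacle is the nonlinear term $\frac{1}{\lambda}\int\phi'(x/\lambda)u^3\,dx$: a naive $\|u\|_{L^\infty}\cdot\|u\|_{L^1}$ bound gives $\langle t\rangle^a/\lambda(t)$, integrating to $\langle T\rangle^{2a}\log T$, which exceeds the leading $\langle T\rangle^{a}\log T$ divergence of $\int dt/\lambda$. The estimate must be sharpened through the uniform $L^3$ bound $\|u\|_{L^3}\lesssim \|u\|_{H^{1/2}}$ combined with a H\"older/interpolation argument absorbing part of the nonlinearity back into the coercive form; this is precisely where the restriction $a<1/2$ is used. A secondary technical issue is the pseudo-differential calculus for the nonlocal multiplier $L_\delta$, whose symbol has two distinct scaling regimes (of order $|\xi|^2$ near the origin after subtracting $1/\delta$, of order $|\xi|$ at infinity), which must be handled uniformly when commuting with the smooth weight $\phi(\cdot/\lambda)$.
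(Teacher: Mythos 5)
Your overall strategy (weighted virials at scale $t^{b}/\log t$, commutator expansion of the nonlocal dispersion to produce the coercive term $\int\phi'\,(q_\delta(\partial_x)u)^2$) is in the right family, but the argument does not close at the two places where the paper works hardest, and you have dropped the mechanism the paper uses to fix both. The paper runs \emph{two} normalized functionals, $\mathcal V(t)=\mu(t)^{-1}\int\phi(x/\lambda(t))\,u\,dx$ and $\mathcal J(t)=(2\mu(t))^{-1}\int\phi(x/\lambda(t))\,u^2\,dx$, with $\mu(t)=t^{a}\log^{2}t$, so that $\lambda(t)\mu(t)=t\log t$. The hypothesis \eqref{Growth} makes $\mathcal V$ \emph{bounded} (this is exactly where the $L^1$ growth enters), its time derivative produces $\tfrac{1}{2\mu\lambda}\int\phi'u^{2}$ with all remaining terms in $L^{1}([10,\infty))$, and hence $\int_{10}^{\infty}\tfrac{1}{\mu\lambda}\int\phi'u^{2}\,dx\,dt<\infty$, i.e.\ \eqref{Integra0}. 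This single fact is (i) the source of the $u^{2}$ part of \eqref{AS}, which a $\mathcal J$-type virial alone can never give since $q_\delta(0)=0$ and the coercive quadratic form carries no low-frequency information; and (ii) the input that controls the cubic term: writing $\phi'=(\phi_0')^{3}$ and using the fractional Leibniz/Gagliardo--Nirenberg bound \eqref{777}, the paper dominates $\tfrac{1}{\lambda\mu}\left|\int\phi'u^{3}\right|$ by $c^{*}\tfrac{1}{\lambda\mu}\int\phi_0'u^{2}$, which is integrable in time by \eqref{Integra0} applied to $\phi_0$. Your ``companion virial on the primitive'' gestures at $\mathcal V$ but rests on a false statement ($\partial_x^{-1}u$ is not globally bounded: $\|u(t)\|_{L^1}$ may grow like $t^{a}$, which is precisely why one must divide by $\mu(t)$), and you never feed its output into the cubic term.

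Your fallback scheme ``errors $=o\bigl(\int^{T}dt/\lambda\bigr)$, then divide'' does not survive your own estimates. The uniform $L^{3}$ bound gives $\bigl|\tfrac{1}{\lambda}\int\phi'u^{3}\bigr|\lesssim 1/\lambda$, which integrates to exactly the order $T^{a}\log T$ of the divergent normalizing integral, not to something smaller; the cubic term has no sign and cannot be ``absorbed into the coercive form'' $\int\phi'(q_\delta u)^{2}$. The same problem afflicts the commutator bookkeeping: the leading part of $[\mathcal L_\delta(\partial_x)\partial_x,\phi(\cdot/\lambda)]$ \emph{is} the main term (this is how $\phi'\,\Omega_\delta'(\partial_x)=\phi'\,q_\delta^{2}(\partial_x)$ appears, cf.\ \eqref{x5}), so a crude $O(1/\lambda)$ operator bound on the whole commutator makes your ``error'' the same size as your main term; only the second-order remainder, which costs $\phi''$ and an extra factor of $1/\lambda$, is integrable in time. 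Also, $\int_{10}^{T}dt/\lambda\sim T^{a}\log T$ rather than $O(\log T)$, so several of your stated orders are inconsistent. Finally, the restriction $a<1/2$ is not the decisive ingredient you invoke for the cubic term; what closes the proof is the pair (boundedness of $\mathcal V$ from \eqref{Growth}) together with (non-integrability of $1/(\lambda\mu)=1/(t\log t)$), which is also why the admissible region is $t^{1-a}/\log t$ and shrinks as the $L^1$ norm is allowed to grow.
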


\begin{rem}
The result in \eqref{AS} also holds as $t\to -\infty$.
\end{rem}

\begin{rem}
Related with the present problem we have the variants found in \cite{KMM}, \cite{KMM1}, \cite{KMPP}. However, here as in \cite{MuPo2}  even under the strongest hypothesis $\,a=0$ in \eqref{Growth} one needs a weight outside the cut-off function $\phi(\cdot)$. This can be seen as a consequence of the weak dispersive relation in the ILW equation which  does not allow to apply the  argument in \cite{KMM1} and  in \cite{MuPo1}. This weight  lets us to close the estimate in a weaker form than those obtained in \cite{KMM1} and \cite{MuPo1}, involving the lim inf instead of the lim.

\end{rem}

\begin{rem}
Theorem \ref{TH1}  affirms that there exists a sequence of times $\{t_n\,:\,n\in\Z^+\}$ with $t_n\uparrow \infty$ as $n\uparrow \infty$ such that
\be\label{asymp22}
\lim_{n\uparrow \infty} \,\int_{|x|\leq \frac{C\,t^{b}_n}{\log(t_n)}}\;
(u^2+( q(\partial_x)u)^2)(x,t_n)dx=0.
\ee

\end{rem}

\begin{rem}

It is known that  the generalized KdV equation
\be
\label{gjkdv}
 \partial_t u +  \partial_x^3  u + \partial_x (g(u)) = 0,\qquad (t,x) \in  \, \R\times \R,
 \ee
possesses breather solutions, i.e. localized solutions which are  also periodic in time, for $\;g(u)=u^3\,$ (modified KdV) and for $\,g(u)=u^2+\mu u^3$  (Gardner equation) with $\mu>0$, see \cite{MuPo1}. In the case of the modified KdV these breather solutions can have arbitrarily small energy i.e. $H^1$-norm.

\medskip

As in the case of the generalized BO equation, for the generalized ILW equations the energy is related to the $\,H^{1/2}$-norm.
 
\end{rem}

\vskip.1in
Next, we consider the initial value problem for the generalized Benjamin-Ono (gBO) equation
\be\label{BOg} 
\begin{aligned}
\begin{cases}
& \partial_t u + \mathcal H \partial_x^2  u + \partial_x (u^k +p_k(u)) = 0,\qquad (t,x) \in  \, \R\times \R,\;\;k=2,3,4,...\\
&u(x,0)=0, 
 \end{cases}
\end{aligned}
\ee
where
\be
\label{poly}
p_k(s)=\sum_{j=k+1}^m a_j\,s^j\,,\;\;\;\;\;\;\;\;\;m\in\Z^+,\,\;\;\;a_j\in\R,\;\;\;\;k=2,3,....
\ee

\vskip.1in
The following results are concerned with the non-existence of time periodic solutions to the IVP associated to the gBO equation \eqref{BOg}:
\vskip.1in

\begin{thm}\label{TH1a}

Let 
\be
\label{h1}
f(u):= u^k+p_k(u),
\ee
with $\,k\,$ and $\,p_k(\cdot)$ as above.
If a real non-trivial solution $u=u(t,x)$ of the IVP \eqref{BOg} 
 \be
 \label{class1}
 u\in C(\R:H^4(\R)\cap L^2(|x|^4dx)) \cap C^1(\R:H^2(\R)\cap L^2(|x|^2dx))
\end{equation}
is periodic in time with period $\,\omega$, then
\be
\label{h2}
\int_0^{\omega}\int_{-\infty}^{\infty}\,f(u(t,x))dxdt=0.
\ee
In particular, if
\be
\label{h3}
\begin{cases}
\begin{aligned}
&f(u)=u^{2j},\;\;\;\;\;\;j\in\Z^+,\\
&\text {or}\\
&f(u)=u^{2j}\pm \rho u^{2j+1}+\epsilon^2u^{2j+2},\;\;\;\;\;\;\;0\leq \rho<2\epsilon,\;\;\;\;\;\;\;j\in\Z^+,
\end{aligned}
\end{cases}
\ee
then $u=u(t,x)$ cannot be time periodic.

\end{thm}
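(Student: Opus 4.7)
The plan is to establish the first-moment (virial) identity
\[
\frac{d}{dt}\int_{\R} x\,u(t,x)\,dx \;=\; \int_{\R} f(u(t,x))\,dx,
\]
and then read off \eqref{h2} by integrating over one period and using $u(\omega,\cdot)=u(0,\cdot)$. The specific examples in \eqref{h3} are handled by showing that their integrand is pointwise nonnegative and vanishes only at $u=0$, so \eqref{h2} forces $u\equiv 0$.

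For the setup, I would first check that $M(t):=\int_{\R}xu(t,x)\,dx$ is a well-defined $C^{1}$ function of $t$ in the class \eqref{class1}. The hypothesis $u\in L^{2}(|x|^{4}dx)\cap L^{2}$ gives $(1+x^{2})u\in L^{2}$, whence Cauchy--Schwarz against the fixed $L^{2}$ weight $x/(1+x^{2})$ yields $xu\in L^{1}$ locally uniformly in $t$. To differentiate $M$ in $t$ I would substitute the equation \eqref{BOg}, writing
\[
M'(t) \;=\; -\int_{\R} x\,\mathcal H\partial_{x}^{2}u\,dx \;-\; \int_{\R} x\,\partial_{x}f(u)\,dx,
\]
the second term giving $\int f(u)\,dx$ after one integration by parts (the boundary term $[xf(u)]_{\pm\infty}$ vanishes because the Sobolev embedding $H^{4}\hookrightarrow C^{3}$ combined with $u\in L^{2}(|x|^{4}dx)$ forces pointwise decay $u(x)=o(|x|^{-1/k})$ at infinity).

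For the nonlocal piece the cleanest route is Fourier-side. Since $xu\in L^{1}$, the function $\widehat{u}(\xi,t)$ is $C^{1}$ in $\xi$ at $\xi=0$ and $M(t)=\tfrac{i}{2\pi}\partial_{\xi}\widehat{u}(0,t)$. From \eqref{BOg} one gets
\[
\widehat{u_{t}}(\xi,t) \;=\; -2\pi i\xi\,\bigl(2\pi|\xi|\,\widehat u(\xi,t)+\widehat{f(u)}(\xi,t)\bigr).
\]
Differentiating in $\xi$, the whole $-2\pi i\xi(\cdots)$ contribution evaluated at $\xi=0$ produces $-2\pi i(2\pi|\xi|\widehat u + \widehat{f(u)})\big|_{\xi=0}=-2\pi i\,\widehat{f(u)}(0,t)$, because the $|\xi|\widehat{u}$ term collapses. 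Inserting this into $M'(t)=\tfrac{i}{2\pi}\partial_{\xi}\widehat{u_{t}}(0,t)$ gives exactly the virial identity. If $u$ is $\omega$-periodic, $M(\omega)=M(0)$, and integrating the identity over $[0,\omega]$ delivers \eqref{h2}.

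To finish, I would invoke the sign structure of \eqref{h3}. When $f(u)=u^{2j}$ the integrand is pointwise $\geq 0$ and vanishes only where $u=0$, so \eqref{h2} forces $u\equiv 0$, contradicting nontriviality. For $f(u)=u^{2j}(1\pm\rho u+\epsilon^{2}u^{2})$ the quadratic factor has discriminant $\rho^{2}-4\epsilon^{2}<0$ under $0\le\rho<2\epsilon$ and positive leading coefficient, so it is strictly positive on $\R$; thus $f(u)\geq 0$ with equality only at $u=0$ and the same conclusion applies. The main obstacle is the rigorous justification of the $\mathcal H$-step: $\mathcal H$ preserves neither $L^{1}$ nor weighted $L^{2}$, so the physical-space manipulations $-\int x\,\mathcal H u_{xx}=\int \mathcal H u_{x}=0$ are delicate; the Fourier argument above sidesteps this by only requiring $\widehat{u}$ and $\partial_{\xi}\widehat{u}$ to make sense at $\xi=0$, which is exactly what the $L^{2}(|x|^{4}dx)\cap H^{4}$ regularity in \eqref{class1} buys.
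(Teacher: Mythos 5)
Your proof is correct and takes essentially the same route as the paper: the paper's entire argument is the virial identity $\frac{d}{dt}\int_{-\infty}^{\infty} x\,u(t,x)\,dx=\int_{-\infty}^{\infty} f(u(t,x))\,dx$, obtained by multiplying the equation by $x$ and integrating by parts, followed by integration over one period. Your Fourier-side justification of why the Hilbert-transform term contributes nothing, and your sign analysis of the cases in \eqref{h3}, merely fill in details the paper leaves implicit.
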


\begin{thm}\label{TH2a}

 For any integer $\,k\geq 3\,$ and any polynomial $\,p_k(\cdot)\,$ in \eqref{poly} there exists $\,\epsilon=\epsilon(k;p_k)>0$ such that
any real solution $u=u(t,x)$ of the IVP \eqref{BOg} 
 \be
 \label{class2}
 u\in C(\R:H^4(\R)\cap L^2(|x|^4dx)) \cap C^1(\R:H^2(\R)\cap L^2(|x|^2dx))
\end{equation}
corresponding to an initial data $u_0$ with 
\be
\label{hy}
0\neq \| u_0\|_{1/2,2} < \epsilon
\ee
cannot be  time periodic.
\end{thm}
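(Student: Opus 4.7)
The plan is to rule out periodicity by combining the mass identity of Theorem~\ref{TH1a} (when $k$ is even) with a weighted virial identity (when $k$ is odd), after first propagating the $H^{1/2}$-smallness of the datum uniformly in time.

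First, I would propagate smallness. The gBO equation \eqref{BOg} admits the conserved $L^{2}$-mass and Hamiltonian
\begin{equation*}
H(u) \;=\; \tfrac{1}{2}\int |D^{1/2}u|^{2}\,dx - \int F(u)\,dx,\qquad F'(u)=f(u)=u^{k}+p_{k}(u),
\end{equation*}
and the Sobolev embedding $H^{1/2}(\R)\hookrightarrow L^{p}(\R)$ holds for every finite $p$. Since $|F(u)|\lesssim |u|^{k+1}$ for $|u|$ bounded, a standard bootstrap yields $\|u(t)\|_{H^{1/2}}\leq C\epsilon$ uniformly in $t\in\R$ whenever $\|u_{0}\|_{H^{1/2}}=\epsilon$ is small enough. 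Interpolating $H^{1/2+\sigma}$ between the small $H^{1/2}$ norm and the uniformly bounded $H^{4}$ norm (finite by continuity and $\omega$-periodicity within class~\eqref{class2}), and using $H^{1/2+\sigma}\hookrightarrow L^{\infty}$, one also obtains $\|u(t)\|_{L^{\infty}}\to 0$ uniformly in $t$ as $\epsilon\to 0$.

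When $k$ is even, I would apply Theorem~\ref{TH1a} directly. Factoring $f(u)=u^{k}(1+g(u))$ with $g(u)=a_{k+1}u+\cdots+a_{m}u^{m-k}$, the pointwise bound $|g(u(t,x))|\leq 1/2$ provided by the first step gives $f(u)\geq\tfrac{1}{2}u^{k}\geq 0$ everywhere. Identity~\eqref{h2} then yields
\begin{equation*}
0\;=\;\int_{0}^{\omega}\!\!\int_{\R}f(u)\,dx\,dt \;\geq\;\tfrac{1}{2}\int_{0}^{\omega}\!\!\int_{\R}u^{k}\,dx\,dt \;\geq\; 0,
\end{equation*}
hence $u\equiv 0$, contradicting $u_{0}\not\equiv 0$.

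When $k$ is odd, $u^{k}$ changes sign and Theorem~\ref{TH1a} alone does not suffice. Instead I would exploit the weighted moment $\int xu^{2}\,dx$, finite in class~\eqref{class2}. Integrating by parts and commuting $\mathcal H$ past the weight $x$ via $\mathcal H(xg)=x\mathcal H g-\tfrac{1}{\pi}\int g\,dy$ (applied to $g=u_{x}$, for which $\int u_{x}\,dy=0$), a direct computation gives
\begin{equation*}
\frac{d}{dt}\int xu^{2}\,dx \;=\; 2\|D^{1/2}u\|_{L^{2}}^{2}+\frac{2k}{k+1}\int u^{k+1}\,dx+\mathcal N(u),
\end{equation*}
where $\mathcal N(u)$ collects terms of the form $\int u^{j+1}\,dx$ with $j\geq k+1$. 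Since $k+1$ is even, $\int u^{k+1}=\|u\|_{L^{k+1}}^{k+1}\geq 0$, and by the first step $|\mathcal N(u)|\leq C\|u\|_{L^{\infty}}\int u^{k+1}\leq\tfrac{k}{k+1}\int u^{k+1}$ for $\epsilon$ small. Thus $\frac{d}{dt}\int xu^{2}\,dx\geq 2\|D^{1/2}u\|_{L^{2}}^{2}+\tfrac{k}{k+1}\int u^{k+1}\,dx\geq 0$, and integrating over $[0,\omega]$ together with $u(\omega)=u(0)$ forces $\|D^{1/2}u(t)\|_{L^{2}}=0$ a.e.~in $t$, which with $L^{2}$ conservation gives $u\equiv 0$, a contradiction.

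The main obstacle is the rigorous justification of the virial identity: $\mathcal H$ must be commuted past the unbounded weight $x$, boundary terms must be shown to vanish, and the higher-order remainder $\mathcal N(u)$ must be dominated by the sign-definite main contribution. The combined regularity and decay of class~\eqref{class2} (namely $u\in H^{4}(\R)\cap L^{2}(|x|^{4}\,dx)$, together with the corresponding weighted control in $C^{1}$) are precisely what make these manipulations, and in particular the commutator estimate $[\mathcal H,x]=\tfrac{1}{\pi}\int(\cdot)\,dy$, well-defined.
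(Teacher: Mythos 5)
Your virial identity in the odd case is exactly the one the paper uses to prove this theorem: multiplying \eqref{BOg} by $xu$ gives \eqref{f9}, with the key cancellation $\int x\,\partial_xu\,\mathcal H\partial_xu\,dx=0$ coming from $[\mathcal H,x]\partial_xu=-\tfrac1\pi\int\partial_xu\,dx=0$ (the paper verifies this on the Fourier side in \eqref{f7}), and the goal of forcing $\int_0^\omega\|D^{1/2}u\|_2^2\,dt=0$ is the right one. However, there is a genuine gap that affects both branches of your argument: you rely on a uniform pointwise bound $\|u(t)\|_{L^\infty}\ll 1$, and this is not available with a threshold $\epsilon$ depending only on $(k,p_k)$, which is what the statement requires. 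Since $H^{1/2}(\R)\not\hookrightarrow L^\infty(\R)$, you interpolate against $\sup_t\|u(t)\|_{H^4}$; but that quantity is finite only because the particular solution is assumed continuous and periodic, so it is a solution-dependent constant $M$, and the bound $\|u\|_\infty\lesssim \epsilon^{1-\theta}M^{\theta}$ cannot be made small by choosing $\epsilon=\epsilon(k,p_k)$ alone. The paper flags precisely this obstruction in Remark (ii) following the theorem: the conservation laws control $\|u(t)\|_{L^p}$ for $p\in[2,\infty)$ but not for $p=\infty$, which is the very reason the nonlinearity is restricted to polynomials. Your even-$k$ branch, which needs the pointwise factorization $f(u)=u^k(1+g(u))$ with $|g(u)|\le 1/2$ to make \eqref{h2} sign-definite, cannot be repaired without $L^\infty$ control and should be abandoned.

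The repair, and the paper's actual route, is to absorb every nonlinear term into the coercive term $\|D^{1/2}u\|_2^2$ using only the Gagliardo--Nirenberg inequality \eqref{GN} with finite exponents. Each monomial appearing in the virial has degree $\ell\ge k+1\ge 4$, and
\[
\int|u|^{\ell}\,dx\le c\,\|u\|_2^{2}\,\|D^{1/2}u\|_2^{\ell-2}\le c\,\epsilon^{2}\,(C\epsilon)^{\ell-4}\,\|D^{1/2}u\|_2^{2},
\]
once the uniform-in-time bound $\|u(t)\|_{1/2,2}\le C\epsilon$ is in hand (your mass--Hamiltonian bootstrap does give this, again via \eqref{GN} rather than pointwise bounds on $F(u)$). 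This absorbs $\tfrac{2k}{k+1}\int u^{k+1}+\mathcal N(u)$ into $2\|D^{1/2}u\|_2^2$ for every $k\ge3$ regardless of parity, so the case split is unnecessary; it is also exactly where the hypothesis $k\ge3$ enters, since for $k=2$ the cubic term is only controlled by $\|u\|_2^2\|D^{1/2}u\|_2$, which carries too low a power of $\|D^{1/2}u\|_2$ to be absorbed. (The paper additionally substitutes the conserved Hamiltonian to rewrite the identity as \eqref{f10} with the positive constant $2I_3(u_0)\gtrsim\|D^{1/2}u_0\|_2^2$ on the right, but working directly from \eqref{f9} as you do is equivalent.)
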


\begin{rem}

(i)  Real solution of the IVP associated to the equation \eqref{BOg} satisfy at least three conservation laws
\be
\label{cl222}
\begin{aligned}
&\mathcal I_1(u):=\int u(t,x)dx=\mathcal I_1(u_0),\;\;\;\;\;\mathcal I_2(u):=\int u^2(t,x)dx=\mathcal I_2(u_0),\\
&\mathcal I_3(u):=\int_{-\infty}^{\infty}\Big(\frac{(D_x^{1/2}u)^2}{2}-\Big(\frac{u^{k+1}}{k+1}+G_k(u)\Big)\Big)dx=\mathcal I_3(u_0),
\end{aligned}
\ee
where
\be
\label{f122}
G_k(s) =\int_0^s\,p_k(r)dr.
\ee

\vskip.1in

(ii) From the conservation laws we have control of the $L^p$-norm of the small solution for $\,p\in[2,\infty)$ but not for $p=\infty$. This is the reason to consider only polynomial perturbations in \eqref{BOg} and \eqref{poly}.

\vskip.1in
(iii) The assumption \eqref{class1} and \eqref{class2}, which justify the integrations by part in the proof,  is not optimal.
\end{rem}
\vskip.1in

Consider now that IVP associated to the generalized ILW (gILW) quation 
\be\label{gILW} 
\begin{cases}
\begin{aligned}
& \partial_t u + \mathcal T_\delta \partial_x^2  u + \frac1{\delta}\partial_x u  + \partial_x( u^k+p_k(u)) = 0,\qquad (t,x) \in & ~ \R\times \R,\;\;k=2,3,4,..\\
& u(x,0)=u_0(x),
\end{aligned}
\end{cases}
\ee
with $\,p_k(\cdot)$ as in \eqref{poly}. Similarly, to the results in Theorem \ref{TH1a} for the gBO equation for the gILW \eqref{gILW} we have:

\begin{thm}\label{TH7}

Let 
\be
\label{h11}
f(u):= u^k+p_k(u),
\ee
with $\,k\,$ and $\,p_k(\cdot)$ as in \eqref{poly}.
If a real non-trivial solution $u=u(t,x)$ of the IVP \eqref{gILW} 
 \be
 \label{class11}
 u\in C(\R:H^4(\R)\cap L^2(|x|^4dx)) \cap C^1(\R:H^2(\R)\cap L^2(|x|^2dx))
\end{equation}
is periodic in time with period $\,\omega$, then
\be
\label{h12}
\int_0^{\omega}\int_{-\infty}^{\infty}\,f(u(t,x))dxdt=0.
\ee
In particular, if
\be
\label{h13}
\begin{cases}
\begin{aligned}
&f(u)=u^{2j},\;\;\;\;\;\;j\in\Z^+,\\
&\text {or}\\
&f(u)=u^{2j}\pm \rho u^{2j+1}+\epsilon^2u^{2j+2},\;\;\;\;\;\;\;0\leq \rho<2\epsilon,\;\;\;\;\;\;\;j\in\Z^+,
\end{aligned}
\end{cases}
\ee
then $u=u(t,x)$ cannot be time periodic.

\end{thm}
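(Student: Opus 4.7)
The strategy is the virial/first-moment method: I work with the functional
$$M(t) := \int_{-\infty}^{\infty} x\, u(t,x)\, dx,$$
which is well-defined and of class $C^1$ under \eqref{class11} (the weight $L^2(|x|^4 dx)$ gives $xu\in L^1$ via Cauchy--Schwarz, and a similar argument applied with \eqref{gILW} controls $x\,\partial_t u$). Differentiating in $t$ and substituting for $\partial_t u$ from \eqref{gILW} yields four integrals. The nonlinear contribution $-\int x\,\partial_x f(u)\,dx$ integrates by parts to $\int f(u)\,dx$, with the boundary terms killed by the weighted decay. The two linear dispersive contributions I expect to cancel exactly, leaving the clean virial identity
$$\frac{d}{dt} M(t) = \int f(u(t,x))\,dx.$$

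Showing this cancellation is the heart of the argument and the main new feature relative to the gBO case of Theorem \ref{TH1a}. First, $-\frac{1}{\delta}\int x\,\partial_x u\,dx = \frac{1}{\delta}\mathcal I_1(u_0)$ by a plain integration by parts. For the nonlocal term I factor $\mathcal T_\delta \partial_x^2 u = \partial_x(\mathcal T_\delta \partial_x u)$ and integrate by parts once to obtain $-\int \mathcal T_\delta \partial_x u\, dx$. Interpreting this as the value of the Fourier transform of $\mathcal T_\delta \partial_x u$ at $\xi = 0$, and using the symbol in \eqref{1.3} together with the asymptotic $-2\pi\xi\coth(2\pi\delta\xi)\to -1/\delta$ as $\xi\to 0$, this integral equals $-\frac{1}{\delta}\mathcal I_1(u_0)$, which cancels the contribution of the $\frac{1}{\delta}\partial_x u$ term.

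With the virial identity in hand, I integrate it over one period $[0,\omega]$ and use $M(t+\omega) = M(t)$, a consequence of the $t$-periodicity of $u$ and the continuity of $M$. This immediately yields $\int_0^\omega \int f(u)\,dx\,dt = 0$, which is \eqref{h12}. For the specific choices in \eqref{h13}, I verify pointwise that $f\geq 0$: the first case is trivial, and in the second I factor $f(u) = u^{2j}\bigl(1 \pm \rho u + \epsilon^2 u^2\bigr)$ and note that the bracketed quadratic has discriminant $\rho^2 - 4\epsilon^2 < 0$ under the hypothesis $0\leq \rho < 2\epsilon$, so it is strictly positive on $\R$. Thus $f(u) \geq 0$ with equality only at $u=0$, and the vanishing of the double integral forces $u\equiv 0$, contradicting non-triviality.

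The main technical difficulty is justifying the Fourier-side evaluation: the symbol of $\mathcal T_\delta$ has a $\xi^{-1}$ singularity at the origin, so rewriting $\int \mathcal T_\delta\partial_x u\,dx$ as $\widehat{\mathcal T_\delta\partial_x u}(0)$ requires $\mathcal T_\delta\partial_x u\in L^1(\R)$ with Fourier transform continuous at $0$. This forces me to combine the Sobolev regularity $u\in H^4$ (for the high-frequency part) with the weighted decay $u\in L^2(|x|^4 dx)$ (to tame the low-frequency singularity); both are supplied by \eqref{class11}. The same ingredients are needed to differentiate $M(t)$ under the integral and to absorb the boundary terms in the integration by parts on the nonlinear piece.
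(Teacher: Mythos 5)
Your proposal is correct and follows essentially the same route the paper intends (it omits the proof, referring to the first-moment virial of Theorem \ref{TH1a} together with the symbol analysis of Section 2): differentiating $\int x\,u\,dx$, integrating over a period, and using pointwise nonnegativity of $f$ via the negative discriminant. Your cancellation of the two linear terms through the limit $-2\pi\xi\coth(2\pi\delta\xi)\to -1/\delta$ is exactly the paper's observation, packaged there as $\mathcal T_\delta\partial_x^2+\delta^{-1}\partial_x=-\mathcal L_\delta(\partial_x)\partial_x$ with the smooth even symbol $\mathcal L_\delta(\xi)=\xi\coth(\delta\xi)-1$ satisfying $\mathcal L_\delta(0)=0$, which also disposes of your worry about the $\xi^{-1}$ singularity of $\mathcal T_\delta$ alone.
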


From the arguments found in the proof of Theorem \ref{TH1}  one has that  the proof of Theorem \ref{TH7} becomes similar to that for Theorem \ref{TH1a}. hence it will be omitted.

\medskip

Now, we shall see that the propagation of regularity established in \cite{ILP1} and \cite{ILP2} in solutions of the KdV equations and BO equations respectively also holds in solutions of the ILW equation \eqref{ILW}.

\begin{thm}\label{TH2} Let $u_0\in H^s(\R),\,s>3/2$  and 
$$
 u\in C(\R:H^s(\R))\cap C^1([0,T]:H^{s-2}(\R))\cap L^{\infty}([0,T]:H^{s+1/2}_{loc}(\R)) ,
 $$
be  the corresponding solution of  the IVP associated to the ILW equation \eqref{ILW} with data $u_0$ provided by Theorem \eqref{WP}.
If for some $x_0\in \R, \,m\in \Z^+$ and $\,m>s$ one has that
\be
\label{hyp1}
M_0:=\,\int_{x_0}^{\infty} (\partial_x^mu_0(x))^2dx<\infty,
\ee
then for any $\gamma>0, \,T>0,\,\epsilon>0,\,R>5\epsilon$ it follows that
\be
\label{res1}
\begin{aligned}
&\sup_{0\leq t\leq T}\,\int_{x_0+\epsilon-\gamma t} (\partial_x^mu(t,x))^2dx+ \int_0^T \int_{x_0+\epsilon-\gamma t}^{x_0+R-\gamma t}(D_x^{1/2}\partial_x^mu(t,x))^2\,dx \,dt\\
\\
&\qquad \leq c=c(\|u_0\|_{s,2};T;M_0;\gamma;\epsilon;R).
\end{aligned}
\ee
\end{thm}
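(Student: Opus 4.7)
The plan is to prove \eqref{res1} by induction on $m$, following the scheme developed for KdV in \cite{ILP1} and for BO in \cite{ILP2}, with adaptations to the ILW dispersion. Fix an integer $m > s$ with $m \geq 2$. The inductive hypothesis is that \eqref{res1} holds for $m-1$ with every admissible choice of $(\gamma,\epsilon,R,T)$; the base case $m_0 = \lfloor s\rfloor + 1$ needs no hypothesis beyond Theorem~A, since $\int (\partial_x^{m'}u)^2\,dx \leq \|u\|_{H^s}^2$ is globally bounded for $m'<m_0$. Set $v:=\partial_x^m u$ and introduce a non-decreasing cutoff $\chi\in C^\infty(\R)$ with $\chi\equiv 0$ on $(-\infty,\epsilon/2]$, $\chi\equiv 1$ on $[\epsilon,\infty)$, and $\chi'$ supported in $[\epsilon/2,\epsilon]$. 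Define the moving weight $\phi(x,t):=\chi(x-x_0+\gamma t)$, for which $\partial_t\phi=\gamma\,\partial_x\phi \geq 0$.

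The key step is to extract a positive local $1/2$-smoothing quantity from the dispersion. From $\sigma_{\mathcal T_\delta}(\xi)=i\coth(2\pi\delta\xi)$ and the asymptotic $\coth(y)-\sgn(y)=O(e^{-2|y|})$ as $|y|\to\infty$, decompose $\mathcal T_\delta = -\mathcal H + S_\delta$, where $S_\delta$ is smoothing of infinite order. Differentiating \eqref{ILW} $m$ times and pairing with $\phi v$, a Calder\'on-type commutator identity analogous to the BO computation in \cite{ILP2} yields
\[
\tfrac12 \frac{d}{dt}\int \phi v^2\,dx + \int \phi_x\,\bigl(D_x^{1/2}v\bigr)^2\,dx \,=\, \bigl(\tfrac{\gamma}{2}+\tfrac{1}{2\delta}\bigr)\int \phi_x v^2\,dx + \mathcal R(v) - \int \phi v\,\partial_x^m(u\,\partial_x u)\,dx,
\]
where $\mathcal R(v)$ pairs derivatives of $\phi$ against strictly lower-order quantities in $v$ (including the contribution of $S_\delta\partial_x^2 v$, absorbed via Theorem~A). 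The $\phi_x v^2$ term on the right is controlled by the inductive hypothesis on $\operatorname{supp}\phi_x$ (or directly by the global $H^s$ bound when $m-1\leq s$).

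For the nonlinearity, expand $\partial_x^m(u\,\partial_x u)=\sum_{k=0}^m \binom{m}{k}(\partial_x^k u)(\partial_x^{m+1-k}u)$. The endpoint terms $k=0$ and $k=1$ reduce after one integration by parts to $(\|u\|_\infty+\|u_x\|_\infty)\int\phi v^2$ plus a $\phi_x$-term, all admissible since $s>3/2$. For the middle indices $2\leq k\leq m-1$ both factors carry at most $m-1$ derivatives, so one is bounded in $L^2(\phi)$ by the inductive hypothesis applied with a slightly wider cutoff (shifting $\epsilon' < \epsilon$ at the same $\gamma$, so that $\operatorname{supp}\phi \subset \{\widetilde\phi\equiv 1\}$), while the other is bounded in $L^\infty$ on $\operatorname{supp}\phi$ using the local $H^{s+1/2}_{\mathrm{loc}}$ information from \eqref{clase} and Sobolev embedding. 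Gronwall's inequality then closes the $L^\infty_t L^2_x$ estimate on $[0,T]$ and, upon time-integrating the $\phi_x\,(D_x^{1/2}v)^2$ term and replacing $\phi_x$ by a second cutoff $\widetilde\chi\leq \phi_x$ supported in $[\epsilon,R]$, produces the localized integrated smoothing estimate in \eqref{res1}.

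The principal obstacle is the rigorous extraction of the $(D_x^{1/2}v)^2$ term from $\mathcal H\partial_x^2$ with remainders controlled by already-bounded norms of $v$: a symbol-level / Calder\'on-commutator computation inherited from the BO theory in \cite{ILP2}, rendered applicable to ILW through the decomposition $\mathcal T_\delta = -\mathcal H + S_\delta$. Subordinate technicalities are the bookkeeping between cutoff widths at successive inductive stages and the careful splitting of each middle Leibniz term into an $L^2(\phi)$-bounded factor and an $L^\infty(\operatorname{supp}\phi)$-bounded factor.
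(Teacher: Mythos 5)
Your proposal is correct and follows essentially the same route as the paper: write $\mathcal T_\delta\partial_x^2=-\mathcal H\partial_x^2+(\mathcal T_\delta+\mathcal H)\partial_x^2$, observe that the correction (whose symbol is bounded near $\xi=0$ and exponentially small at high frequencies, hence maps $L^2$ into $H^s$ for every $s$ once composed with $\partial_x^2$ --- note it is this composition, not $S_\delta$ alone, that is smoothing, because of the $1/\xi$ singularity of $\coth(2\pi\delta\xi)-\sgn\xi$ at the origin) is harmless in weighted energy estimates, and then run the Benjamin--Ono propagation-of-regularity induction of \cite{ILP2} unchanged. The paper simply cites \cite{ILP2} at this point, whereas you spell out the induction, the cutoff bookkeeping and the extraction of the $(D_x^{1/2}v)^2$ term, so your write-up is a more detailed version of the same argument.
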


\vskip.1in

Our scheme  to prove Theorem \ref{TH2} will be to reduce it to the proof  provided in \cite{ILP2} for solutions of the BO equation.

\vskip.1in

Our last theorem concerns with the \emph{far field region dominated by linear decay}. A direct plane wave analysis of \eqref{ILW} reveals that, formally, linear waves tends to travel with nonnegative speeds, similar to KdV and BO, and they concentrate in the spatial region $\{x\leq 0\}$. Here we prove that, despite the size of the data, the region of truly influence of linear waves is just $\{ -t\log^{1+\epsilon} t \ll  x\leq 0 \}$, for $\epsilon>0$.

\begin{thm}\label{TH3} Assume $u\in  C(\R:H^{1/2+}(\R))\cap L^{\infty}(\R:H^{1/2+}(\R))$. Then for any $\mu(t) \gtrsim t\log^{1+\epsilon} t $ and $\epsilon>0$,
\be\label{decay_far}
\lim_{t\to+\infty} \| u(t)\|_{L^2(|x|\sim \mu(t))} =0.
\ee
\end{thm}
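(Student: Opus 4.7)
I would set up a localised $L^2$-virial adapted to the far-field annulus $\{|x|\sim \mu(t)\}$, in the spirit of Theorem \ref{TH1} and the Kato-type identities in \cite{MuPo2}. By monotonicity of the region in $\mu$, it suffices to take $\mu(t)=t\log^{1+\epsilon}t$. Pick a smooth even cutoff $\psi\colon\R\to[0,1]$ with $\psi\equiv 0$ on $\{|y|\le 1/2\}$ and $\psi\equiv 1$ on $\{|y|\ge 1\}$, and set
\[
I(t) := \int_\R \psi\!\left(\tfrac{x}{\mu(t)}\right) u^2(t,x)\,dx.
\]
Since $\psi(x/\mu(t))\ge \mathbf 1_{\{|x|\ge \mu(t)\}}$, it is enough to prove $I(t)\to 0$ as $t\to+\infty$.

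Differentiating $I$ along \eqref{ILW} and integrating by parts in the local quadratic and cubic terms produces
\[
I'(t) = -\frac{\mu'(t)}{\mu(t)^2}\!\int\! x\,\psi'_\mu\, u^2\,dx \;-\; 2\!\int\! \psi_\mu\, u\,\mathcal T_\delta\partial_x^2 u\,dx \;+\; \frac{1}{\delta\mu(t)}\!\int\! \psi'_\mu\, u^2\,dx \;+\; \frac{2}{3\mu(t)}\!\int\! \psi'_\mu\, u^3\,dx,
\]
with $\psi_\mu=\psi(x/\mu(t))$ and $\psi'_\mu=\psi'(x/\mu(t))$. The first ``moving-weight'' term is $\le 0$ since $y\psi'(y)\ge 0$. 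The nonlocal middle term is reduced, using $\mathcal T_\delta\partial_x^2=\partial_x(\mathcal T_\delta\partial_x)$, one integration by parts, and anti-self-adjointness of $\mathcal T_\delta$, to the Kato-smoothing identity
\[
-2\int \psi_\mu u\,\mathcal T_\delta\partial_x^2 u\,dx = -\int \psi'_\mu\,(q_\delta(\partial_x) u)^2\,dx \;+\; \mathrm{LOT},
\]
where $q_\delta$ is the Fourier multiplier from Theorem \ref{TH1}. The lower-order terms involve the commutator $[\mathcal T_\delta,\psi_\mu]\partial_x$, whose $L^2$-boundedness follows from a Calder\'on–Coifman–McIntosh–Meyer estimate with norm $\lesssim \|\psi'\|_\infty/\mu(t)$; this is exactly the ingredient that keeps the estimate at the level of $H^{1/2+}$ (no full derivative of $u$ is required).

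Combining the $H^{1/2+}$-bound of $u$, Sobolev embedding $H^{1/2+}\hookrightarrow L^3$, and the non-positivity of the principal smoothing term $-\int \psi'_\mu(q_\delta u)^2$, we obtain
\[
I'(t) \le \frac{C}{\mu(t)}\bigl(\|u\|^2_{L^\infty_t H^{1/2+}}+\|u\|^3_{L^\infty_t H^{1/2+}}\bigr).
\]
Because $\mu(t)\gtrsim t\log^{1+\epsilon}t$ makes $\mu^{-1}$ integrable at infinity, we get $\limsup_{T\to\infty} I(T)\le \inf_{t_0} I(t_0)$, so $I$ has a limit $I_\infty\ge 0$. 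Moreover, integrating the full identity yields $\int_1^\infty \int \psi'_\mu\,(q_\delta u)^2\,dx\,dt<\infty$, hence along some sequence $t_n\to\infty$ the $q_\delta$-mass in the annulus $\{|x|\sim \mu(t_n)\}$ vanishes.

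To close with $I_\infty=0$, combine this vanishing with a Littlewood–Paley decomposition: the high-frequency piece at scale $|\xi|>\Lambda_t:=\log^{1+\epsilon/2}t$ already has $L^2$-mass $\lesssim \Lambda_t^{-(1+)}\to 0$ by $H^{1/2+}$-control; for the low-frequency piece, $q_\delta(\xi)\simeq |\xi|/\delta^{1/2}$ near the origin together with a localised Poincar\'e inequality on the annulus converts smallness of the $q_\delta$-mass into smallness of the $L^2$-mass, the mean-zero obstruction being absorbed via the uniform $L^2$-bound. The main obstacle is precisely this last step: the passage from $q_\delta(\partial_x)$-smoothing to genuine $L^2$-smallness on a single annulus at the regularity $H^{1/2+}$, since the zero symbol $q_\delta(0)=0$ from Theorem \ref{TH1} leaves the low-frequency mass uncontrolled by the virial alone and must be recovered through the super-linear far-field speed $\mu(t)/t\to\infty$.
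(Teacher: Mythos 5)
Your virial setup and the computation of $I'(t)$ are essentially the paper's (the paper uses a translated weight $\phi\big(\frac{x+\mu(t)}{\la(t)}\big)$ rather than a dilated one, but that is cosmetic), and your treatment of the nonlocal and cubic terms as $O(1/\mu(t))\in L^1([10,\infty))$ at regularity $H^{1/2+}$ is the right idea. The gap is in where you look for coercivity. The term that drives the whole proof is the transport term $-\frac{\mu'(t)}{\mu(t)^2}\int x\,\psi'\big(\frac{x}{\mu(t)}\big)u^2\,dx$, which you correctly note is nonpositive and then discard. Since $\mu'(t)/\mu(t)\sim 1/t$ is \emph{not} integrable while every other term on the right-hand side is $O(1/\mu(t))=O\big(1/(t\log^{1+\epsilon}t)\big)\in L^1$ --- this is exactly what the superlinear choice of $\mu$ buys --- integrating the identity gives directly
\[
\int_{10}^\infty \frac1t\int \frac{x}{\mu(t)}\,\psi'\Big(\frac{x}{\mu(t)}\Big)\,u^2(t,x)\,dx\,dt<\infty,
\]
i.e.\ time-integrability (against $dt/t$) of the $L^2$ mass itself in the annulus. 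No passage through $q_\delta(\partial_x)$ is needed.

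By contrast, the route you propose --- extracting $\int\psi'_\mu(q_\delta(\partial_x) u)^2$ and converting it back into $L^2$ mass --- does not close. First, for your even exterior cutoff $\psi'$ changes sign on the left half-line, so $-\int\psi'_\mu(q_\delta(\partial_x) u)^2$ is not single-signed as claimed; moreover in this functional the smoothing term carries a prefactor $1/\mu(t)$ and is integrable junk (controlled by the uniform $H^{1/2+}$ bound), not a coercive quantity. Second, the low-frequency recovery via a localised Poincar\'e inequality fails: the annulus has length $\sim\mu(t)\to\infty$, so the Poincar\'e constant degenerates, the mean of $u$ over the annulus is not small, and $q_\delta(\partial_x)u$ restricted to the annulus does not control $q_\delta(\partial_x)$ applied to the restriction. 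You flag this as the main obstacle, and it is indeed not repairable at $H^{1/2+}$ --- but it is also unnecessary. Finally, your endgame ($I$ has a limit $I_\infty$, then show $I_\infty=0$) is not completed: subsequential vanishing of the \emph{annulus} mass does not control the full exterior mass $I(t_n)$. The paper closes instead with a second virial: take a weight $\tilde\phi$ whose derivative is supported strictly inside the annulus where the first weight's derivative is bounded below; then $\big|\frac{d}{dt}\int\tilde\phi_\mu u^2\big|$ is bounded by the $L^1_t$ quantity displayed above plus $1/(t\log^{1+\epsilon}t)$, so the annulus-localised mass converges in time, and combined with its subsequential vanishing the limit is zero.
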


\begin{rem}
From the proof of Theorem \ref{TH3}, it will become clear that it is also valid for KdV, mKdV, BBM and BO equations. Therefore, this estimate is independent of the integrability of the equation. Note also that no size restriction is needed in \eqref{decay_far}, therefore it is a truly nonlinear decay estimate. It can be also rephrased as \emph{complete integrability cannot travel/move faster than $t\log t$}. 
\end{rem}

\begin{rem}
Theorem \ref{TH3} seems to us the first decay result in the energy space inside the linearly dispersive regime using nonintegrable techniques, with data only in the energy space.
\end{rem}

An interesting by-product of Theorem \ref{TH3} is the following result, in principle awkward in nature, that essentially says that \emph{integrability in time does not necessarily imply decay to zero in time}. 

\begin{cor}\label{TH4}
Under the hypothesis of Theorem \ref{TH3}, for any $\epsilon>0$ and any $\tilde\phi \in C_0^\infty([0,1])$, $\tilde\phi>0$ in $(0,1)$, one has
\be\label{LL}
\int_{10}^\infty \frac{1}{t}  \left( \int_{-t}^{t\log^{1+\epsilon} t-t} \left(\frac{x+t}{t \log^{1+\epsilon } t}\right)\tilde \phi\Big( \frac{x+t}{t \log^{1+\epsilon } t}\ \Big) u^2(t,x)dx \right) dt<+\infty.
\ee
\end{cor}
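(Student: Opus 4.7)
The plan is to extract Corollary~\ref{TH4} from the virial monotonicity inequality that underlies the proof of Theorem~\ref{TH3}. Set $\mu(t):=t\log^{1+\epsilon}(t)$ and choose as smooth primitive $\Psi(y):=\int_{-\infty}^{y}\tilde\phi(s)\,ds$, so that $\Psi\in C^\infty_b(\R)$ is monotone nondecreasing, $\Psi\equiv 0$ on $(-\infty,0]$, $\Psi\equiv \|\tilde\phi\|_{L^1}$ on $[1,\infty)$, and $\Psi'=\tilde\phi\ge 0$. Introduce the weighted virial functional
\begin{equation*}
F(t):=\int_\R \Psi\!\left(\frac{x+t}{\mu(t)}\right)u^2(t,x)\,dx,\qquad t\ge 10.
\end{equation*}
The hypothesis $u\in L^\infty(\R:H^{1/2+})\subset L^\infty(\R:L^2)$ gives $0\le F(t)\le \|\tilde\phi\|_{L^1}\sup_t\|u(t)\|_{L^2}^{2}=:M<\infty$.

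Next, I would differentiate $F$ in time. Writing $y=(x+t)/\mu(t)$ and using $\mu'(t)/\mu(t)=1/t+(1+\epsilon)/(t\log t)$, the pure transport contribution produces
\begin{equation*}
\int\Psi'(y)\,\partial_t y\,u^2\,dx \;=\; \frac{1}{\mu(t)}\int\tilde\phi(y)\,u^2\,dx \;-\;\frac{1}{t}\Big(1+\tfrac{1+\epsilon}{\log t}\Big)\int y\,\tilde\phi(y)\,u^2\,dx,
\end{equation*}
whose dominant piece is exactly $-G(t)/t$ with $G(t)$ denoting the inner integral in \eqref{LL}, while the positive remainder is bounded by $C\mu(t)^{-1}$, which is integrable on $[10,\infty)$ since $\epsilon>0$. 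Substituting $u_t=-\mathcal T_\delta u_{xx}-\delta^{-1}u_x-uu_x$ in $2\int\Psi u\,u_t\,dx$ and integrating by parts, the first order transport and cubic nonlinearity produce terms proportional to $\mu(t)^{-1}\int\tilde\phi\,u^j\,dx$ with $j=2,3$, and these are controlled by $C\mu(t)^{-1}$ via the Sobolev embedding $H^{1/2+}(\R)\hookrightarrow L^\infty(\R)$ together with the uniform $L^\infty_t H^{1/2+}_x$ bound; hence they are again integrable in time.

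The only delicate piece is the dispersive contribution $-2\int\Psi\,u\,\mathcal T_\delta u_{xx}\,dx$. After one integration by parts and using the skew-adjointness of $\mathcal T_\delta$, this reduces to a $\Psi_x$-weighted boundary piece plus the commutator term $-\int[\mathcal T_\delta,\Psi]\,u_x\cdot u_x\,dx$. The main obstacle is estimating this commutator at only $H^{1/2+}$ regularity: the point is that $\sigma(\mathcal T_\delta)$ is of order $0$ at high frequencies, so $[\mathcal T_\delta,\Psi]$ is of order $-1$ with Calder\'on-type bound $\|[\mathcal T_\delta,\Psi]\partial_x\|_{L^2\to L^2}\lesssim \|\Psi'\|_\infty \lesssim \mu(t)^{-1}$; a low/high frequency split combined with the uniform $H^{1/2+}$ bound then yields a net $O(\mu(t)^{-1})$ contribution. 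This is precisely the estimate carried out in the proof of Theorem~\ref{TH3}, so I would invoke it directly.

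Collecting everything,
\begin{equation*}
F'(t)\;\le\;-\frac{c}{t}\,G(t) + E(t),\qquad E(t)=O\!\big(\mu(t)^{-1}\big)\in L^1([10,\infty)).
\end{equation*}
Integrating from $10$ to $T$ and using $0\le F(T)\le M$ produces
\begin{equation*}
c\int_{10}^T \frac{G(t)}{t}\,dt \;\le\; F(10) - F(T) + \int_{10}^\infty E(t)\,dt \;\le\; M + \|E\|_{L^1([10,\infty))},
\end{equation*}
uniformly in $T$. Letting $T\to\infty$ yields \eqref{LL} by monotone convergence.
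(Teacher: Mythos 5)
Your proposal is correct and follows essentially the same route as the paper: the functional $F(t)=\int\Psi\bigl(\frac{x+t}{t\log^{1+\epsilon}t}\bigr)u^2\,dx$ is exactly the paper's $\mathcal J_e$ with $\lambda(t)=t\log^{1+\epsilon}t$, $\mu(t)=t$ and $\phi=\Psi$ a primitive of $\tilde\phi$, and the conclusion comes from the same sign observation on the $\frac{1}{t}\int y\,\tilde\phi(y)u^2$ term plus integrability of the remaining (dispersive, transport, cubic) contributions, handled by the same commutator estimate as in Theorems \ref{TH1} and \ref{TH3}. The only cosmetic differences are your normalization of $\Psi$ and the use of the $H^{1/2+}\hookrightarrow L^\infty$ embedding for the cubic term in place of the paper's fractional Gagliardo--Nirenberg argument \eqref{777}.
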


We emphasize that Corollary \ref{TH4} is against intuition, because it shows integrability in time of a portion of the $L^2$ norm on a region where we know that solitons appear. For instance, \eqref{LL} must hold for any fixed soliton $Q_{\delta,c}$ described in \eqref{Soliton}, and it is indeed the case. See Remark \ref{Final} for more details.

\medskip

The rest of this paper is organized as follows. In section 2 we shall prove Theorem \ref{TH1}. Section 3 contains the proof of Theorem
 \ref{TH1a} and Theorem \ref{TH2a}, Section 4 that  of Theorem  \ref{TH2}, and Section 5 the proof of Theorem \ref{TH3} and Corollary \ref{TH4}.

\subsection*{Acknowledgments} We would like to thank C. Kwak for interesting comments and discussions about this work.

\bigskip

\section{Proof of Theorem \ref{TH1}}

\medskip

First, we shall obtain some estimates concerning the symbol modeling the dispersive relation in the ILW equation
\eqref{ILW}
\be
\label{a1}
\sigma\Big( \mathcal T_{\delta} \partial_x^2  +\frac{\partial_x}{\delta}\Big)(\xi)= - i\left( (2\pi\xi)^2\,\hbox{coth}(2\pi\delta\xi)-\frac{2\pi\xi}{\delta}\right)= - i\,\Omega_{\delta}(2\pi\xi),
\ee
with
\be
\label{symbol2}
 \mathcal T_\delta \partial_x^2  +\frac{\partial_x}{\delta}= - \mathcal L_{\delta}(\partial_x)\,\partial_x.
 \ee

We observe that $\Omega_{\delta}(\cdot)$ is an smooth (analytic), real valued odd function with $\Omega_{\delta}(\xi)>0$ for $\xi>0$,
\begin{equation}
\label{a2}
\Omega_{\delta}(\xi)=\frac{1}{3} \frac{\xi^3}{\delta}+O\Big(\frac{\xi^5}{\delta^3}\Big)\;\;\;\;\;\text{as}\;\;\;\;\;|\xi|\downarrow 0,
\end{equation}
and
\begin{equation}
\label{a3}
\Omega_{\delta}(\xi)=\xi|\xi|-\frac{\xi}{\delta}+O(1)\;\;\;\;\;\text{as}\;\;\;\;\;|\xi|\uparrow \infty.
\end{equation}

Moreover, 
\begin{equation}
\label{a4}
\Omega'_{\delta}(\xi)= \frac{\xi^2}{\delta}+O\Big(\frac{\xi^4}{\delta^3}\Big)\;\;\;\;\;\text{as}\;\;\;\;\;|\xi|\downarrow 0,
\end{equation}
and
\begin{equation}
\label{a5}
\Omega'_{\delta}(\xi)=2|\xi|-\frac{1}{\delta}+O\Big(\frac{1}{\delta\xi}\Big)\;\;\;\;\;\text{as}\;\;\;\;\;|\xi|\uparrow \infty.
\end{equation}

Also, $\,\Omega_{\delta}'(\cdot)$ is even with $\Omega_{\delta}'(0)=\Omega_{\delta}''(0)=0$, $\Omega_{\delta}'(\xi)>0$ for $\xi> 0$, and 
\be
\label{a6}
\| \Omega_{\delta}''\,\|_{\infty}\leq c=c(\delta).
\ee

Consequently, the symbol $\mathcal L_{\delta}(\xi)=\xi\coth(\delta\xi)-1$ of the operator $\mathcal L_{\delta}(\partial_x)$ in \eqref{symbol2}  is smooth, even, positive for $\xi \neq 0$ with
\be
\label{123}
\mathcal L_{\delta}(0)=\mathcal L_{\delta}'(0)=0\;\;\;\;\text{and}\;\;\;\;\mathcal L_{\delta}''(0)=\frac{2}{3\delta},
\ee
and
\be
\label{123b}
\mathcal L_{\delta}(\xi)=\frac{\xi^2}{3\delta}+O(1)\;\;\;\;\;\text{as}\;\;\;\;\;\xi\uparrow \infty
\ee

Next, we define the operator $\,q_{\delta}(\partial_x)$ whose symbol $\,q_{\delta}(\xi)$ is the square root of  $\Omega_{\delta}'(\xi)$, i.e.
\be
\label{a7}
\Omega'_{\delta}(\xi)=q_{\delta}(\xi)\,q_{\delta}(\xi),
\ee
such that $q_{\delta}(\cdot)$ is even, $q_{\delta}(\xi)>0$ for $\xi>0$. Hence,
\begin{equation}
\label{a8}
q_{\delta}(\xi)= \frac{\xi}{\delta^{1/2}}+O\Big(\frac{\xi^3}{\delta^{5/2}}\Big)\;\;\;\;\;\text{as}\;\;\;\;\;\xi\downarrow 0,
\end{equation}
and
\begin{equation}
\label{a9}
q_{\delta}(\xi)=\sqrt{\xi}\Big(1-\frac{1}{2\delta \xi}+O(\delta\xi^{-2})\Big)\;\;\;\;\;\text{as}\;\;\;\;\;\xi\uparrow \infty,
\end{equation}
with
\be
\label{a10}
q_{\delta}(0)=0\;\;\;\;\;\;\text{and}\;\;\;\;\;\;q_{\delta}'(0^+)=\frac{1}{\delta^{1/2}}.
\ee
In addition, we shall use that
\be
\label{a11}
\mathcal T_\delta \partial_x^2   +\frac{\partial_x}{\delta}=\partial_x^2\Psi_{\delta}(\partial_x).
\ee
where the symbol of $\,i\,\Psi_{\delta}(\cdot)$, $i\Psi(\xi)=1/\xi-\coth(\delta\xi)$,  is $C(\R) \cap L^{\infty}(\R)$, odd, real valued and of order zero.

\vskip.1in
\noindent
Without loss of generality, we assume now that $t\geq 10$ in \eqref{Interval}. We recall that $a+b=1,\,a,b\geq0$. Let
\be\label{la}
\la(t) := \frac{t^{b}}{\log t},\qquad \mu(t):=t^a \log^2 t.
\ee
Note that
\[
\frac{\la'(t)}{\la(t)}\sim \frac{\mu'(t)}{\mu(t)}\sim \frac1t, \quad \la(t)\mu(t) =t^{a+b}\log t.
\]
This follows from
\[
\la'(t) = \frac{b}{t^{1-b} \log t} -\frac1{t^{b} \log^2 t} = \frac1{t^{1-b} \log t} \left(b - \frac1{\log t}\right),
\]
and
\be\label{Computations}
\frac{\la'(t)}{\la(t)} = \frac{1}{t}\left( \frac12 - \frac1{\log t} \right), \quad \la'^2(t) =  \frac{1}{t \log^2 t}\left(\frac12 - \frac 1{\log t} \right)^2.
\ee

We introduce the following class of  weight
\be
\label{class}
\begin{aligned}
\mathcal A_C=\{\phi:\R\to\R &\,:\,\,\phi\;\text{smooth}, \,\phi(-C-1) =0,\,\text{supp} \,\phi'\subset [-C-1,C+1],\\
&\,\;\;\;\;\phi'\,\text{even},\,\;\phi'(x)=1\,\;\text{for}\,\;|x|\leq C,\;\phi'(x)\leq 0\,\;\text{for}\,\;x\geq 0\,\}.
\end{aligned}
\ee

We shall use that if $\phi\in \mathcal A_C$, then there exists $\phi_0\in \mathcal A_C$ such that
\be
\label{issue}
\phi'(x)=\big(\phi_0'(x)\big)^{3}.
\ee

This choice of weights is not essential but it will simplify the exposition. For each $\phi\in \mathcal A_C$ we  consider the functional $\mathcal V(t)=
\mathcal V_{\phi}(t)$ for $u$ solving \eqref{ILW}
\be\label{III}
\mathcal V(t):=\frac1{\mu(t)}\int \phi \Big( \frac{x}{\la(t)} \Big)  u(t,x)dx.
\ee
Notice that from the hypothesis \eqref{Growth} of Theorem \ref{TH1}, one has  
\be
\label{hypo11}
\sup_{t\in \R}\mathcal V(t) <+\infty.
\ee

\begin{lem}\label{dtI0}
We have
\be\label{dtI}
\begin{aligned}
\frac d{dt}\mathcal V(t) = &~ {}  - \frac{\la'(t)}{\mu(t)\la(t)} \int \frac{x}{\la(t)}  \phi'\Big( \frac{x}{\la(t)} \Big) u(t,x)dx \\
& ~{} - \frac{\mu'(t)}{\mu^2(t)}\int   \phi \Big( \frac{x}{\la(t)} \Big)  u(t,x)dx \\
& ~{} -\frac1{\mu(t)\lambda^2(t)}\int  \phi'' \Big( \frac{x}{\la(t)} \Big)   \Psi_{\delta}(\partial_x) u(t,x)dx \\
& ~{} + \frac1{2\mu(t)\lambda(t)}\int   \phi' \Big( \frac{x}{\la(t)} \Big)  u^2 (t,x)dx.
\end{aligned}
\ee
\end{lem}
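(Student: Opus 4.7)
The plan is to differentiate $\mathcal V(t)$ directly under the integral sign, substitute the ILW equation for $\partial_t u$, and perform the appropriate integrations by parts to move all spatial derivatives onto the cut-off $\phi(x/\lambda(t))$.

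First I would apply the quotient and chain rules to
$\mathcal V(t)=\mu(t)^{-1}\int \phi(x/\lambda(t))\,u(t,x)\,dx$. Noting that $\partial_t[x/\lambda(t)]=-x\lambda'(t)/\lambda^2(t)$, this gives three pieces: the $-\mu'/\mu^2$ term (which reproduces the second line of \eqref{dtI}), the $-\lambda' x/(\mu\lambda^2)$ term (the first line of \eqref{dtI}), and the remaining $\mu(t)^{-1}\!\int \phi(x/\lambda)\,\partial_t u\,dx$.

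For the last piece I would substitute, from \eqref{ILW} combined with the identity \eqref{a11},
\begin{equation*}
\partial_t u=-\partial_x^2\Psi_\delta(\partial_x)u-\tfrac12\partial_x(u^2),
\end{equation*}
and then integrate by parts. The nonlinear part contributes
\begin{equation*}
-\frac{1}{2\mu(t)}\int \phi\!\left(\frac{x}{\lambda(t)}\right)\partial_x(u^2)\,dx=\frac{1}{2\mu(t)\lambda(t)}\int \phi'\!\left(\frac{x}{\lambda(t)}\right)u^2\,dx,
\end{equation*}
which is precisely the fourth line of \eqref{dtI}. For the dispersive term I would integrate by parts twice, moving both derivatives from $\partial_x^2\Psi_\delta(\partial_x)u$ onto the weight; the chain rule produces the prefactor $1/\lambda^2(t)$ and the weight $\phi''(x/\lambda(t))$, giving the third line of \eqref{dtI}.

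The main obstacle is the justification of the integrations by parts, especially for the nonlocal operator $\Psi_\delta(\partial_x)$. The weight $\phi(x/\lambda)$ is only bounded at $+\infty$ (it approaches a nonzero constant since $\phi'$ has compact support and $\phi(-C-1)=0$), so decay at infinity must be extracted from $u$ itself. Here the hypotheses \eqref{m1}--\eqref{hyp2} are decisive: $u(t,\cdot)\in H^s(\R)\cap L^1(\R)$ with $s>3/2$, and the symbol $\Psi_\delta$ is bounded (real and of order zero by \eqref{a11}), so $\Psi_\delta(\partial_x)u$ and its derivative belong to $L^2(\R)$; combined with the compact support of $\phi'$, this lets all boundary contributions from the double integration by parts vanish (the one with $\phi'$ because of its support, the one with $\phi$ itself by an approximation argument using Schwartz functions and density). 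The nonlinear integration by parts is harmless since $\phi$ is bounded and $u^2\in L^1$. Collecting the four surviving contributions yields exactly \eqref{dtI}.
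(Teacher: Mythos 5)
Your proposal is correct and follows essentially the same route as the paper: differentiate under the integral, use \eqref{a11} to write the dispersive part as $\partial_x^2\Psi_\delta(\partial_x)u$, substitute the equation, and integrate by parts (twice for the dispersive term, once for the nonlinear one) to land on the four terms of \eqref{dtI}. The additional care you take in justifying the boundary terms via the regularity hypotheses is a welcome refinement of the paper's purely formal computation, but it is not a different argument.
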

\begin{proof}
We have, using \eqref{a11}, that 
\[
\begin{aligned}
\frac d{dt}\mathcal V(t) = &~ {} - \frac{\la'(t)}{\mu(t)\la(t)} \int \frac{x}{\la(t)}  \phi'\Big( \frac{x}{\la(t)} \Big) u(t,x)dx - \frac{\mu'(t)}{\mu^2(t)}\int   \phi \Big( \frac{x}{\la(t)} \Big)  u(t,x)dx\\
&~ {}  + \frac1{\mu(t)}\int   \phi \Big( \frac{x}{\la(t)} \Big) \partial_t u(t,x)dx\\
= &~ {} - \frac{\la'(t)}{\mu(t)\la(t)} \int \frac{x}{\la(t)}  \phi'\Big( \frac{x}{\la(t)} \Big) u(t,x)dx  - \frac{\mu'(t)}{\mu^2(t)}\int   \phi \Big( \frac{x}{\la(t)} \Big)  u(t,x)dx \\
&~ {}  - \frac1{\mu(t)\lambda(t)}\int   \phi \Big( \frac{x}{\la(t)} \Big)\Psi_{\delta}(\partial_x) \partial_x^2  u (t,x)dx+ \frac1{2\mu(t)\lambda(t)}\int   \phi' \Big( \frac{x}{\la(t)} \Big)  u^2 (t,x)dx\\
= &~ {} - \frac{\la'(t)}{\mu(t)\la(t)} \int \frac{x}{\la(t)}  \phi'\Big( \frac{x}{\la(t)} \Big) u(t,x)dx - \frac{\mu'(t)}{\mu^2(t)}\int   \phi \Big( \frac{x}{\la(t)} \Big)  u(t,x)dx \\
& ~{}- \frac1{\mu(t)\lambda^2(t)}\int  \phi'' \Big( \frac{x}{\la(t)} \Big)    \Psi_\delta(\partial_x)u(t,x)dx \\
& ~{} + \frac1{2\mu(t)\lambda(t)}\int   \phi' \Big( \frac{x}{\la(t)} \Big)  u^2 (t,x)dx.  
 \end{aligned}
\]
\end{proof}

Combining  the fact that  $\Psi_{\delta}$ is a $C^1$ symbol of order zero with the second conservation law in \eqref{CL} one gets that
\be
\label{a12}
\Big| \frac{1}{\mu(t)\lambda^2(t)}\int  \phi'' \Big( \frac{x}{\lambda(t)} \Big)    \Psi_{\delta}(\partial_x)u(t,x)dx \Big |
\leq\frac{c}{\mu(t)\lambda^{3/2}}
=\frac{c}{t^{a+3b/2}(\log(t))^{1/2}}.
\ee
From the hypothesis \eqref{Growth} and the fact that
$$
\frac{x}{\lambda(t)} \phi'\Big( \frac{x}{\la(t)} \Big)\in L^{\infty}(\R)\;\;\;\text{uniformly in }\;\;\;t\in[10,\infty),
$$
we have
\[
\abs{\frac{\la'(t)}{\mu(t)\la(t)} \int \frac{x}{\la(t)}  \phi'\Big( \frac{x}{\la(t)} \Big) u(t,x)dx} \lesssim \frac{1}{t^{}\log^{2}t}.
\]
 Similarly,
 \[
\abs{\frac{\mu'(t)}{\mu^2(t)}\int   \phi \Big( \frac{x}{\la(t)} \Big) u(t,x)dx}\lesssim \frac{1}{t \log^2 t}.
 \]

Inserting the above estimates in \eqref{dtI} it follows  that
\be\label{Estimate1}
\frac d{dt}\mathcal V(t) =  \frac1{2\mu(t)\lambda(t)}\int   \phi' \Big( \frac{x}{\la(t)} \Big)  u^2 (t,x)dx +h(t),
\ee
with $\,h\in L^1([10,\infty))$. From this estimate, we clearly see that 
\be\label{Integra0}
\int_{10}^\infty \frac1{\mu(t)\lambda(t)}\int   \phi' \Big( \frac{x}{\la(t)} \Big)  u^2 (t,x)dx dt =c_{\phi}<+\infty.
\ee
for any weight $\phi(\cdot)$ in the class $\mathcal A$ defined in \eqref{class} and since $\frac1{\mu(t)\lambda(t)}$ does not integrate in $[10,\infty)$, it follows that for  some increasing  sequence of time $t_n\to +\infty$,
\be\label{Integra1}
\lim_{n\to \infty} \int_{-\infty}^{\infty} \phi\Big( \frac{x}{\la(t_n)} \Big) u^2(t_n,x)dx =0.
\ee
\medskip

\medskip
\noindent
Next, we define the functional $\mathcal J(t)$ as
\be\label{J}
\mathcal J(t):= \frac1{2\mu(t)}\int \phi\Big( \frac{x}{\la(t)} \Big) u^2(t,x)dx.
\ee
We claim the following result.

\begin{lem}\label{dtJ0}
We have
\be\label{dtJ}
\begin{aligned}
\frac d{dt}\mathcal J(t) = &~ {} -\frac{\mu'(t)}{2\mu^2(t)}\int \phi\Big( \frac{x}{\la(t)} \Big) u^2(t,x)dx   - \frac{\la'(t)}{2\mu(t)\la(t)} \int \frac{x}{\la(t)} \phi'\Big( \frac{x}{\la(t)} \Big) u^2(t,x)dx\\
 &~ {}+ \frac1{\mu(t)} \int  \frac1{\la(t)}\phi\Big( \frac{x}{\la(t)} \Big) u  \left(-\mathcal T_\delta \partial^2_x -\frac{\partial_x}{\delta}\right) u  (t,x)dx\\
 &~ {} + \frac1{3\la(t)\mu(t)} \int \phi'\Big( \frac{x}{\la(t)} \Big) u^3 (t,x)dx.
\end{aligned}
\ee
\end{lem}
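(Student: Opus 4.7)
My plan is to compute $\frac{d}{dt}\mathcal J(t)$ by straightforward differentiation under the integral sign, applying the chain rule to the profile $\phi(x/\lambda(t))$ and substituting the ILW equation \eqref{ILW} for $\partial_t u$. This parallels the derivation of \eqref{dtI} in Lemma \ref{dtI0}, except that now $u$ is replaced by $u^2$ inside the integral, so the transport nonlinearity $u\partial_x u$ will produce an additional cubic term after one integration by parts.

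Concretely, I would proceed in three steps. First, $\partial_t$ applied to the prefactor $1/(2\mu(t))$ produces the term $-\frac{\mu'(t)}{2\mu^2(t)}\int \phi(x/\lambda(t)) u^2\, dx$. Second, the chain rule applied to $\phi(x/\lambda(t))$ yields a factor $-x\lambda'(t)/\lambda^2(t)$ multiplying $\phi'(x/\lambda(t))$, which after regrouping gives the second term of \eqref{dtJ}. Third, and more substantially, the remaining piece is $\frac{1}{\mu(t)}\int \phi(x/\lambda(t))\, u\, \partial_t u\, dx$, into which I would insert $\partial_t u = -\mathcal T_\delta \partial_x^2 u - \delta^{-1}\partial_x u - u\,\partial_x u$. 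The linear dispersive-transport piece is kept intact and produces the third term of the identity (it will later be analyzed through \eqref{a11}, exactly as in the treatment after Lemma \ref{dtI0}), while the nonlinear piece is rewritten as
\[
-\frac{1}{\mu(t)}\int \phi\!\left(\frac{x}{\lambda(t)}\right) u^2\, \partial_x u\, dx = -\frac{1}{3\mu(t)}\int \phi\!\left(\frac{x}{\lambda(t)}\right) \partial_x(u^3)\, dx,
\]
and an integration by parts in $x$ produces $\frac{1}{3\lambda(t)\mu(t)}\int \phi'(x/\lambda(t))\, u^3\, dx$, matching the last term of \eqref{dtJ}.

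The only technical points are that the differentiation under the integral sign and the integration by parts are legitimate. These are justified by the regularity class \eqref{m1}, namely $u\in C(\R:H^s(\R))\cap C^1(\R:H^{s-2}(\R))$ with $s>3/2$, together with the localization of $\phi'$ (compact support) and the boundedness of $\phi$: the boundary terms at $x=\pm\infty$ vanish because $u(t,\cdot)\in L^2(\R)\cap L^3(\R)$ by Sobolev embedding, and the integrand in the $u^3$ term is compactly supported. I do not foresee a genuine obstacle; the lemma is a bookkeeping computation in the spirit of Lemma \ref{dtI0}, and the real analytical work begins only in the subsequent exploitation of \eqref{dtJ} to extract coercivity and close the $\liminf$ argument of Theorem \ref{TH1}.
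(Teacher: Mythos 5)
Your proposal is correct and follows essentially the same route as the paper: differentiate the prefactor $1/(2\mu(t))$ and the profile $\phi(x/\lambda(t))$, substitute the equation for $\partial_t u$, and integrate the cubic flux term by parts, with the regularity class justifying the manipulations. The only point worth noting is that the factor $\frac{1}{\lambda(t)}$ in the third term of the lemma's statement is a typo (it is absent both from the paper's own computation and from the definition of $A_1(t)$ in \eqref{eqn:care}), and your derivation correctly produces that term without it.
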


\begin{proof} We have, using \eqref{symbol2}, that
\[
\begin{aligned}
\frac d{dt}\mathcal J(t) = &~ {} -\frac{\mu'(t)}{2\mu^2(t)}\int \phi\Big( \frac{x}{\la(t)} \Big) u^2(t,x)dx  - \frac{\la'(t)}{2\mu(t)\la(t)} \int \frac{x}{\la(t)} \phi'\Big( \frac{x}{\la(t)} \Big) u^2(t,x)dx \\
&~{} + \frac1{\mu(t)} \int \phi\Big( \frac{x}{\la(t)} \Big) (u\partial_t u)(t,x)dx\\
= &~ {}  -\frac{\mu'(t)}{2\mu^2(t)}\int \phi\Big( \frac{x}{\la(t)} \Big) u^2(t,x)dx   - \frac{\la'(t)}{2\mu(t)\la(t)} \int \frac{x}{\la(t)} \phi'\Big( \frac{x}{\la(t)} \Big) u^2(t,x)dx\\
 &~ {}+ \frac1{\mu(t)} \int \phi\Big( \frac{x}{\la(t)} \Big) u \Big(-\mathcal T_\delta \partial_x^2  u  -\frac{\partial_xu}{\delta}-  u\partial_xu \Big) (t,x)dx\\
  = &~ {}  -\frac{\mu'(t)}{2\mu^2(t)}\int \phi\Big( \frac{x}{\la(t)} \Big) u^2(t,x)dx   - \frac{\la'(t)}{2\mu(t)\la(t)} \int \frac{x}{\la(t)} \phi'\Big( \frac{x}{\la(t)} \Big) u^2(t,x)dx\\
 &~ {}- \frac1{\mu(t)} \int   \phi\Big( \frac{x}{\la(t)} \Big)  u(t,x)  \,\partial_x\mathcal L_{\delta}(\partial_x) u  (t,x)dx\\
 &~ {} + \frac1{3\la(t)\mu(t)} \int \phi'\Big( \frac{x}{\la(t)} \Big) u^3 (t,x)dx.
\end{aligned}
\]
The proof is complete.
\end{proof}

Next, we procede to bound the terms appearing in Lemma \ref{dtJ0} in the right hand side of \eqref{dtJ}. First, from \eqref{la} we have
\[
\abs{-\frac{\mu'(t)}{2\mu^2(t)}\int \phi\Big( \frac{x}{\la(t)} \Big) u^2(t,x)dx} \lesssim \frac{1}{t^{a+1}\log^2 t},
\] 
which is integrable in $[10,\infty)$. Next, we have again the bound
\[
\abs{\frac{\la'(t)}{2\mu(t)\la(t)} \int \frac{x}{\la(t)} \phi'\Big( \frac{x}{\la(t)} \Big) u^2(t,x)dx}\lesssim \frac{1}{t^{a+1}\log^2 t}.
\]
Now we deal with the term
\be
\label{eqn:care}
A_1(t):= \frac{1}{\mu(t)} \int  \phi\Big( \frac{x}{\la(t)} \Big) u(t,x) \, \mathcal L_{\delta}(\partial_x) \partial_x  u  (t,x)dx,
\ee
which requires more care. Since the symbol $ \mathcal L_{\delta}(\partial_x)$ is even one has that
\be
\label{x1}
\begin{aligned}
A_1(t)&= -  \frac{1}{\mu(t)} \int  \mathcal L_{\delta}(\partial_x) \partial_x\Big(\phi\Big( \frac{x}{\la(t)} \Big) u(t,x) \Big)  u  (t,x)dx\\
&= - \frac{1}{\mu(t)} \int   \phi\Big( \frac{x}{\la(t)} \Big) u(t,x) \, \mathcal L_{\delta}(\partial_x) \partial_x  u(t,x) dx \\
&-\frac{1}{\mu(t)} \int  u(t,x)\,\Big[ \mathcal L_{\delta}(\partial_x) \partial_x; \phi\Big( \frac{x}{\la(t)} \Big)\Big] u(t,x) dx.
\end{aligned}
\ee

Therefore,
\be\label{x2}
2A_1(t) =-\frac{1}{\mu(t)} \int   u(t,x)\,\Big[ \mathcal L_{\delta}(\partial_x) \partial_x; \phi\Big( \frac{x}{\la(t)} \Big)\Big] u(t,x) \, dx.
\ee

Using Fourier transform one sees that
\be
\label{x3}
\begin{aligned}
&\widehat{\Big( \Big [\mathcal L_{\delta}(\partial_x) \partial_x; \phi \Big] f \Big)}(\xi)\\
&=c \int(\Omega_{\delta}(\xi)-\Omega_{\delta}(\eta))\,\widehat{\phi}(\xi-\eta)\,\widehat{f}(\eta) d\eta.
\end{aligned}
\ee
Since
\be
\label{x4}
\Omega_{\delta}(\xi)-\Omega_{\delta}(\eta)=(\xi-\eta)\Omega_{\delta}'(\eta)+\frac{(\xi-\eta)^2}{2} \Omega_{\delta}''(\theta\eta+(1-\theta)\xi)
\ee
with $\theta=\theta(\xi,\eta)\in[0,1]$ and $\|\Omega_{\delta}''\|_{\infty}\leq c_{\delta}$ it follows that
\be\label{x5}
\Big [\mathcal L_{\delta}(\partial_x) \partial_x; \phi \Big] f(x) =c \,\phi'(x)(\Omega_{\delta}'(\partial_x)f)(x)+R_1(x),
\ee
with the term $R_1$ (after combining Plancherel identity, Minkowski  integral inequality and Sobolev embedding) satisfying the estimate
\be
\label{x6}
\| R_1\|_2\leq \| \widehat{\phi''}\|_1\|f\|_2\leq c \|\phi''\|_{1,2}\|f\|_2,
\ee
where $ \,\|h\|_{k,2}=\|(1-\partial_x^2)^{k/2}h\|_2,\;\;k\in\R$.

Inserting this estimate in \eqref{x2}  we can conclude that 
\be
\label{x7}
A_1(t)=  \frac{c}{\mu(t)\,\la(t)}\,\int u(t,x)\,\phi'\Big( \frac{x}{\la(t)}\Big)\Omega_{\delta}'(\partial_x)u(t,x) \, dx + r_1(t)
\ee
with  $r_1\in L^1([10,\infty))$.

From \eqref{x7} we can write, using that the symbol of $q(\partial_x)$ is even \eqref{a7} that
\begin{equation}
\label{001}
\begin{aligned}
\mu(t)\,\la(t)(A_1(t)+r_1(t))&=c \int \phi'\Big( \frac{x}{\la(t)} \Big) u(t,x)\,q(\partial_x) q(\partial_x) u  (t,x) dx\\
&=c\int q(\partial_x)(\phi'\Big( \frac{x}{\la(t)} \Big)  u(t,x))\,q(\partial_x) u  (t,x)dx\\
&=c \int \phi'\Big( \frac{x}{\la(t)} \Big) q(\partial_x) u(t,x)\,q(\partial_x) u  (t,x)dx\\
&\quad + c\int  q(\partial_x)\Big[q(\partial_x); \phi'\Big(\frac{\cdot}{\la(t)}\Big) \Big] u(t,x) \, u  (t,x)dx
\end{aligned}
\end{equation}

We claim :
\begin{equation}
\label{estimate111}
\|\,q(\partial_x)\Big[q(\partial_x); \rho \Big] f\|_2\leq c\,\|\,\partial_x\rho\|_{2,2}\|f\|_2.
\end{equation}

To prove \eqref{estimate111} we take Fourier transform to get
\begin{equation}
\label{aa3}
\widehat{\big(q(\partial_x)\Big[q(\partial_x); \rho \Big] f\big)}(\xi)=c \,q(\xi)\int(q(\xi)-q(\eta))\widehat{\rho}(\xi-\eta)\widehat{f}(\eta)d\eta
\end{equation}
We observe : if $\xi \eta\geq 0$, then
\[
|q(\xi)|\,|q(\xi)-q(\eta)|\leq c |q(\xi)|\,|q'(\theta\eta+(1-\theta)\xi)|\,|\xi-\eta|\leq c|\xi-\eta|\big(1+|\xi-\eta|\big).
\]
see \eqref{a7}-\eqref{a10},  and if $\xi\eta<0$, then
\[
|q(\xi)|\,|q(\xi)-q(\eta)|\leq c|\xi-\eta|\big(1+|\xi-\eta|\big).
\]
Hence,
\begin{equation}
\label{aa5}
\begin{aligned}
&\|\,q(\partial_x)\Big[q(\partial_x); \rho \Big] f\|_2\leq c\|\widehat{((1-\partial_x^2)^{1/2}\partial_x\rho)}\ast \widehat{f}||_2\\
&\leq c\|\widehat{((1-\partial_x^2)^{1/2}\partial_x\rho)}\|_1\|\widehat{f}||_2
\leq c\|(1-\partial_x^2)^{1/2}\partial_x\rho\|_{1,2}\|{f}\|_2\\
&\leq c\|\partial_x\rho\|_{2,2}\|{f}\|_2.
\end{aligned}
\end{equation}
In our case $\rho(x)=\phi'\Big( \frac{x}{\la(t)}\Big)$, therefore
\[
\|q(\partial_x)\Big[q(\partial_x); \phi'\Big(\frac{\cdot}{\la(t)}\Big) \Big] u \|_2\leq \frac{c}{(\la(t))^{1/2}}\|\phi''\|_{1,2}\|u\|_2,
\]
which inserted in \eqref{001} tells us that
\be
\label{aa7}
A_1(t)=\frac1{\mu(t)\la(t)} \int \phi'\Big( \frac{x}{\la(t)} \Big) q(\partial_x) u\,q(\partial_x) u  (t,x)dx+r_1(t)+ g(t),
\ee
with
\[
|g(t)|\leq \frac{c}{\mu(t)(\la(t))^{3/2}}\in L^1([10,\infty)).
\]

Finally, we shall bound the last term in \eqref{dtJ}. Combining  the remark in \eqref{issue}, the calculus for fractional derivatives deduce in \cite{KPV} (Appendix) and the conservation laws in \eqref{CL} we write
\be
\label{777}
\begin{aligned}
\Big|\frac1{\la(t)\mu(t)}& \int \phi'\Big( \frac{x}{\la(t)} \Big) u^3 (t,x)dx\Big|=
\frac{1}{\la(t)\mu(t)} \| \phi_0'\Big( \frac{\cdot}{\la(t)} \Big) u (t,\cdot)\|_3^3\\
&\leq c\,\frac{1}{\la(t)\mu(t)} \| D_x^{1/2}(\phi_0'\Big( \frac{\cdot}{\la(t)} \Big) u (t,\cdot))\|_2 \| \phi_0'\Big( \frac{\cdot}{\la(t)} \Big) u^2 (t,\cdot)\|_2^2\\
&\leq c^*\,\frac{1}{\la(t)\mu(t)}\,\int \phi'_0\Big( \frac{x}{\la(t)} \Big) u^2 (t,x)dx.
\end{aligned}
\ee
with $c^*=c^*(\|u_0\|_{1/2,2};\|\phi'_0\|_{1,2})$. Using \eqref{Integra0} with $\phi_0\in \mathcal A_C$ instead of $\phi\in \mathcal A_C$ it follows that the last term in \eqref{777} belongs to $L^1([10,\infty)$.

 Now gathering the above results one has that
\be\label{Estimate111}
\frac d{dt}\mathcal J(t) =  \frac1{2\mu(t)\lambda(t)}\int   \phi' \Big( \frac{x}{\la(t)} \Big)  (q(\partial_x)u)^2 (t,x)dx +h_1(t),
\ee
with $\,h_1\in L^1([10,\infty))$. Therefore, from the conservation laws in \eqref{CL} one concludes that
\be\label{Integra00}
\int_{10}^\infty \frac1{\mu(t)\lambda(t)}\int   \phi' \Big( \frac{x}{\la(t)} \Big) (q(\partial_x) u)^2 (t,x)dx dt <+\infty.
\ee
This combines with \eqref{Integra0} and \eqref{a8}-\eqref{a10} leads us to the estimate
\be\label{Integra000}
\int_{10}^\infty \frac1{\mu(t)\lambda(t)}\int   \phi' \Big( \frac{x}{\la(t)} \Big)( (q(\partial_x) u)^2 +u^2)(t,x)dx dt \\
<+\infty
\ee
which yields the desired result \eqref{AS} and basically completes the proof of Theorem \ref{TH1}.

\begin{rem}

In \cite{KeMa} the estimate \eqref{777} was established for solutions of the BO equation by fixing the  weight
$$
\phi(x)=\frac{\pi}{2}+\text{arctan}(x)\;\;\;\;\;\;\;\text{so that}\;\;\;\;\;\;\phi'(x)=\frac{1}{1+x^2},
$$
which resembles the profile of the soliton solution of the BO equation. 

\end{rem}

\section{Proof of Theorems \ref{TH1a}-\ref{TH2a}}

\begin{proof}  [Proof of Theorem \ref{TH1a}]

 We observe that multiplying the equation in \eqref{BOg} by $\,x\,$ and integrating  after some integration by part one obtains
 \be
 \label{aa1}
 \frac{d\;}{dt}\,\int_{-\infty}^{\infty}\,x\,u(t,x) dx=\int_{-\infty}^{\infty} f(u(t,x))dx,
\ee
which yields the desired result.

\end{proof}
\begin{proof} [Proof of Theorem \ref{TH2a}]

Multiplying the equation \eqref{BOg} by $\,\mathcal H\partial_xu-(u^k+p_k(u))$ and integration the result one gets 
\be
\label{cl2}
\frac{d\;}{dt}\int_{-\infty}^{\infty}\Big(\frac{(D_x^{1/2}u)^2}{2}-\Big(\frac{u^{k+1}}{k+1}+G_k(u)\Big)\Big)dx=\frac{d\;}{dt} I_3(u)=0,
\ee
where
\be
\label{f1}
G_k(s) =\int_0^s\,p_k(r)dr.
\ee

Now multiplying the equation \eqref{BOg} by $\,x u(t,x)$ we obtain
\be
\label{f2}
\frac{1}{2}\frac{d\;}{dt}\int u^2xdx+\int\mathcal H\partial_x^2u u x dx + \int \,\partial_x(u^k+p_k(u)) u x dx=0.
\ee

First, we consider the second term in \eqref{f2}
\be
\begin{aligned}
\label{f3}
E_1:=&\int\mathcal H\partial_x^2u u x dx= - \int\mathcal H\partial_xu \partial_xu x dx - \int\mathcal H\partial_xu u  dx\\
=&\, E_{1,1}- \int (D_x^{1/2}u)^2dx.
\end{aligned}
\ee

We claim that 
\be
\label{claim1}
\,E_{1,1}= - \int\mathcal H\partial_xu \partial_xu x dx=0.
\ee

By integration by part
\be\label{f4}
\begin{aligned}
E_{1,1}&=\int  \partial_xu \mathcal H (\partial_xu x)dx\\
&=  \int \partial_x u \mathcal H\partial_xu x dx +\int \partial_xu \big[\mathcal H; x\big]\partial_xu dx.
\end{aligned}
\ee

Therefore,
\be
\label{f5}
E_{1,1}=\,-\frac{1}{2}\,\int \, u\,\partial_x \big[\mathcal H; x\big]\partial_xudx
\ee

 To obtain the claim it suffices to see that if 
 \be
 \label{f6}
 \partial_x \big[\mathcal H; x\big]\partial_x u\equiv 0,
 \ee
which follows the identities
\be
\label{f7}
\widehat{\big(\partial_x \big[\mathcal H; x\big]\partial_x u\big)}(\xi,t)=-2\pi(|\xi|\partial_{\xi}\big(\xi \widehat{u}(\xi,t)\big)-\xi\partial_{\xi}\big(|\xi| \widehat{u}(\xi,t)\big))=0.
\ee

Returning to \eqref{f2} we observe that
\be
\label{f8}
\begin{aligned}
\int \,\partial_x(u^k+p_k(u)) u x dx &= \int \partial_x\Big(\frac{k}{k+1} \,u^{k+1}+P_k(u)\Big)(t,x)\, xdx\\
&=-\frac{k}{k+1} \,\int u^{k+1}dx -\int P_k(u)dx,
\end{aligned}
\ee
where $\,\partial_xP_k(s)=s\,\partial_xp_k(s)$.

\vskip.1in

Thus, inserting \eqref{f3}, \eqref{claim1}, and \eqref{f8} in \eqref{f2} one gets that 
\be
\label{f9}
\frac{1}{2} \frac{d\;}{dt}\int u^2(t,x)\,x\,dx-\int\,(D_x^{1/2}u)^2 dx-\frac{k}{k+1} \,\int u^{k+1}dx -\int P_k(u)dx=0
\ee
which gives that
\be
\label{f10}
\frac{1}{2} \frac{d\;}{dt}\int u^2(t,x)\,x\,dx= 2I_3(u_0)+\frac{k+2}{k+1}\int u^{k+1}dx +\int (2G_k(u)+P_k(u))dx.
\ee

Next, we recall the following inequality of Gagliardo-Nirenberg type : for $ \,q\in [2,\infty)$
\be
\label{GN}
\| f\|_q\leq c_q \| D^{1/2-1/q}f\|_2\leq c_q\|f\|_2^{2/q} \,\| D^{1/2}f\|_2^{1-2/q}.
\ee

Using the notation
\be\label{f11}
y(t)=\| D_x^{1/2}u(t)\|_2,
\ee
and combining \eqref{cl2}, \eqref{poly} and \eqref{GN} one sees that
\be
\label{a111}
 \frac{d\;}{dt}\,\int_{-\infty}^{\infty}\,x\,u(t,x) dx=\int_{-\infty}^{\infty} f(u(t,x))dx.
\ee
An integration in the time interval $[0,\omega]$ completes the proof.

\end{proof}

\section{Proof of Theorem \ref{TH2}}

\begin{proof} First, we consider the principal symbol of the operator modeling the dispersive relation in the ILW equation \eqref{ILW}
 \be
\label{3.1}
\sigma( \mathcal T_{\delta}\,\partial_x^2)=  -4 \pi^2 \xi^2\,\text{coth}(2\pi \delta \xi)\,i.
\ee

Thus,
\be
\label{3.2}
\begin{aligned}
&4\pi^2|\xi|\xi -4 \pi^2 \xi^2\,\text{coth}(2\pi \delta \xi)\\
& =4\pi^2|\xi|\xi\Big(1-\text{sgn}(\xi)\text{coth}(2\pi\delta \xi)\Big)\\
&=4\pi^2|\xi|\xi\, \Big(-\frac{2e^{-4\pi\delta |\xi|}}{1-e^{-4\pi\delta |\xi|}}\Big).
\end{aligned}
\ee

Hence, for any $M>0$ sufficiently large (such that $\,e^{-4\pi^2\delta M}\leq 1/2$) one has
\be
\label{3.3}
\sup_{|\xi|\geq M}|\,|4\pi^2|\xi|\xi -4 \pi^2 \xi^2\,\text{coth}(2\pi \delta \xi) |\leq 16 M^2\,e^{-4\pi^2\delta M}.
\ee

Now, taking $\chi \in C_0^{\infty}(\R)$ such that $\text{supp}\subset [-1,1]$ with $\chi(x)=1,\,x\in [-1/2,1/2]$ and $ \chi(x)\geq 0,\;x\in\R.$
one sees from \eqref{3.3} that for any $R>0$
\be
\label{symbl1}
\sigma_{R,1}(\xi)=(4\pi^2|\xi|\xi -4 \pi^2 \xi^2\,\text{coth}(2\pi \delta \xi))\Big(1-\chi(\xi/R)\Big)\in S^{-\infty},
\ee
i.e. $\sigma_{R,1}$ is the symbol of a smoothing pseudo-differential operator and that for any $R>0$
\be
\label{symbl2}
\sigma_{R,2}(\xi)=(4\pi^2|\xi|\xi -4 \pi^2 \xi^2\,\text{coth}(2\pi \delta \xi)) \,\chi(x/R),
\ee
is a multiplier with compact support.

Therefore, the operator $A_{\delta_{R}}(\partial_x)$ with symbol $\sigma(A_{\delta_{R}})(\xi)=4\pi^2|\xi|\xi -4 \pi^2 \xi^2\,\text{coth}(2\pi \delta \xi) $ satisfies that for any $s\geq 0$
\be
\label{estimate1}
\| \,A_{\sigma_R} f\|_{s,2}\leq c\,\|f\|_2,\;\;\;\;\text{with}\;\;\; c=c(s;\delta).
\ee

Thus, rewriting  the ILW equation \eqref{ILW} as
\be\label{moILW} 
\begin{aligned}
 \partial_t u + \mathcal H\,\partial_x^2u+(\mathcal T_\delta \partial_x^2  -\mathcal H\,\partial_x^2) u+ \frac1{\delta}\partial_x u  + u\partial_x u = 0,
\end{aligned}
\ee
we observe that the argument carried out in \cite{ILP2} for the BO equation based on weighted energy estimates can be applied for the equation in \eqref{moILW} without any major modification.

This essentially yields the proof of Theorem \ref{TH2}.
\end{proof}

\section{Proof of Theorem \ref{TH3} and Corollary \ref{TH4}}

We follow the proof of Theorem \ref{TH1} with some key differences.  The most important is that now we choose $\la(t)$ such that $\la^{-1}(t)$ is  \emph{integrable in time}. Set now (compare with \eqref{la}), for $t \geq 10$ and $\epsilon>0$, 
\be\label{la_new}
\la(t) := t \log^{1+\epsilon} t, \quad \mu(t):= \la(t)=  t \log^{1+\epsilon} t.
\ee
Any other choice of $\la(t)$ (and therefore, $\mu(t)$) which is bigger in size, also works. Note that $1/\la(t)$ is now integrable in $[10,\infty)$, and
\[
\frac{\la'(t)}{\la(t)}= \frac{\mu'(t)}{\mu(t)}\sim \frac1t, \quad \frac{\mu'(t)}{\la(t)} \sim \frac{1}{t }.
\]
Recall that now $\frac{\mu'(t)}{\la(t)}$ is not integrable in $[2,\infty)$.

\medskip

\noindent
Now, as in \eqref{J}, we define the modified functional $\mathcal J_e(t)$ as
\be\label{Je}
\mathcal J_e(t):= \frac1{2}\int \phi\Big( \frac{x+\mu(t)}{\la(t)} \Big) u^2(t,x)dx.
\ee
Here, $\phi$ is a smooth bounded function to be chosen later. Following the lines of the proof of Lemma \ref{dtJ0}, we claim
\begin{lem}\label{dtJ22}
We have
\be\label{dtJ2}
\begin{aligned}
\frac d{dt}\mathcal J_e(t) = &~ {} \frac{\mu'(t)}{2\la(t)}\int \phi'\Big( \frac{x+\mu(t)}{\la(t)} \Big) u^2(t,x)dx  \\
&~{}  - \frac{\la'(t)}{2\la(t)} \int \left(\frac{x+\mu(t)}{\la(t)} \right)\phi'\Big( \frac{x+\mu(t)}{\la(t)} \Big) u^2(t,x)dx\\
 &~ {}+ \int \phi\Big( \frac{x+\mu(t)}{\la(t)} \Big) u  \left(-\mathcal T_\delta \partial^2_x -\frac{\partial_x}{\delta}\right) u  (t,x)dx\\
 &~ {} + \frac1{3\la(t)} \int \phi'\Big( \frac{x+\mu(t)}{\la(t)} \Big) u^3 (t,x)dx.
\end{aligned}
\ee
\end{lem}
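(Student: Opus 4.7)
The plan is to perform a direct differentiation of $\mathcal J_e(t)$ under the integral sign, following exactly the template of Lemma \ref{dtJ0}, but accounting for two structural differences: there is no outer $1/\mu(t)$ normalization, and the weight $\phi$ is evaluated at the translated argument $\xi(t,x):=(x+\mu(t))/\la(t)$ rather than $x/\la(t)$. Writing
\[
\partial_t\Big[\phi(\xi(t,x))\Big] = \frac{\mu'(t)}{\la(t)}\phi'(\xi(t,x)) - \frac{\la'(t)}{\la(t)}\,\xi(t,x)\,\phi'(\xi(t,x)),
\]
the two contributions from the chain rule immediately yield, after multiplying by $\tfrac{1}{2}u^2$ and integrating in $x$, the first two lines on the right-hand side of \eqref{dtJ2}. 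Notice in particular that the shift by $+\mu(t)$ in the numerator produces a genuine $\mu'(t)/\la(t)$ factor, which is the main new feature compared with \eqref{dtJ}.

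For the remaining piece I compute $\int \phi(\xi(t,x))\,u\,\partial_t u\,dx$ and substitute the ILW equation \eqref{ILW} in the form $\partial_t u = -\mathcal T_\delta \partial_x^2 u - \tfrac{1}{\delta}\partial_x u - u\partial_x u$. The linear dispersive part contributes directly the third line of \eqref{dtJ2}. For the nonlinear part I use $u\cdot u\partial_x u = \tfrac{1}{3}\partial_x(u^3)$ and integrate by parts once; the derivative falls on the weight, producing the factor $\tfrac{1}{\la(t)}\phi'(\xi(t,x))$ and hence the last line of \eqref{dtJ2}. No boundary terms arise at $|x|=\infty$ under the hypothesis $u\in L^\infty(\R:H^{1/2+}(\R))$ together with the decay of $u$ in the Sobolev sense, after a standard approximation by Schwartz data.

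The computation is essentially algebraic, and in contrast to Lemma \ref{dtJ0} there is no need at this stage to commute the operator $\mathcal T_\delta\partial_x^2+\partial_x/\delta$ with $\phi(\xi(t,x))$ — the dispersive term is kept intact and will be analyzed in the subsequent steps of the proof of Theorem \ref{TH3}. The only mild subtleties are the careful bookkeeping of signs coming from the translation by $\mu(t)$ and from the integration by parts of the cubic term, and the justification of the manipulations at the regularity level $H^{1/2+}$, which follows by density from the $H^s$ theory with $s>3/2$ and the continuity of the flow.
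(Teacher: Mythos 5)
Your proposal is correct and follows essentially the same route as the paper, which proves this lemma by the same direct differentiation used for Lemma \ref{dtJ0}: chain rule on the translated weight $\phi\big((x+\mu(t))/\la(t)\big)$ (producing the extra $\mu'(t)/\la(t)$ term), substitution of the equation for $\partial_t u$, and one integration by parts on $\tfrac13\partial_x(u^3)$. Your observation that the dispersive term is deliberately left uncommuted at this stage matches the paper's structure, and the density/regularity remark addresses the only point the paper leaves implicit.
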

Now, we prove decay on the left portion $x\sim -\la(t)$. First of all, choose $\phi \in C^\infty \cap L^\infty$ such that $\phi(x)\in [0,1]$ for all $x\in\R$,  $\phi(s)= 1$ if $s\leq -1$, $\phi(s)=0$ for $s\geq 0$, and $\phi'\leq 0$ in $\R$, see \eqref{class}. 

\medskip

Notice that $\phi'\left(\frac{x+\mu(t)}{\la(t)} \right) \neq 0$ only in the region of $x\in\R$ such that  $-1\leq \frac{x+\mu(t)}{\la(t)}\leq 0$, essentially  nonpositive. Therefore, for all $x\in\R,$
\[
 \phi'\Big( \frac{x+\mu(t)}{\la(t)} \Big) \leq 0, \qquad \left(\frac{x+\mu(t)}{\la(t)} \right)\phi'\Big( \frac{x+\mu(t)}{\la(t)} \Big)\geq 0.
\]
Since  
\[\int \phi\Big( \frac{x+\mu(t)}{\la(t)} \Big) u  \left(-\mathcal T_\delta \partial^2_x -\frac{\partial_x}{\delta}\right) u  (t,x)dx = - \int  \phi\Big( \frac{x}{\la(t)} \Big) u(t,x) \, \mathcal L_{\delta}(\partial_x) \partial_x  u  (t,x)dx, 
\]proceeding as in the estimate of \eqref{eqn:care}, we get
\[
\abs{\int  \phi\Big( \frac{x}{\la(t)} \Big) u(t,x) \, \mathcal L_{\delta}(\partial_x) \partial_x  u  (t,x)dx} \lesssim \frac1{\la(t)}.
\]
Now, the cubic term in \eqref{dtJ2} is treated in similar fashion as in \eqref{777}. We obtain that 
 \[
 \abs{\frac1{3\la(t)} \int \phi'\Big( \frac{x+\mu(t)}{\la(t)} \Big) u^3 (t,x)dx} \lesssim \frac1{\la(t)}.
 \]
Consequently, following the proof of Theorem \ref{TH1}, and using that $\la^{-1}(t)$ integrates in time, we have from \eqref{dtJ2}:
\[
\begin{aligned}
&\int_{10}^\infty \frac{1}{t} \left( \int -\phi'\Big( \frac{x+\mu(t)}{\la(t)} \Big) u^2(t,x)dx \right.\\
& \qquad\quad  \left. + \int \left(\frac{x+\mu(t)}{\la(t)} \right)\phi'\Big( \frac{x+\mu(t)}{\la(t)} \Big) u^2(t,x)dx \right) dt<+\infty.
\end{aligned}
\]
Hence, a sequence of times $t_n\uparrow+\infty$ is such that the integrand converges to zero. Once again in \eqref{dtJ2}, choose now $\widetilde\phi \in C_0^\infty(\R) $ such that 
\be\label{tildephi}
\begin{aligned}
& \hbox{$\widetilde\phi(x)\in [0,1]$ for all $x\in\R$,  $\widetilde\phi(s)= 0$  if $s\leq -3/4$, $\widetilde\phi(s)=1$ for $s\geq -1/4$, }\\
& \hbox{and $\widetilde\phi' \geq 0$  in $\R$. }
\end{aligned}
\ee
A new estimate of $\frac d{dt}\mathcal J_e(t) $ gives
\[
\begin{aligned}
\abs{\frac d{dt}\mathcal J_e(t) } \lesssim &~{}  \frac{1}{t}\int \widetilde\phi'\Big( \frac{x+\mu(t)}{\la(t)} \Big) u^2(t,x)dx \\
&~{} + \frac{1}t \int \left(\frac{x+\mu(t)}{\la(t)} \right)\widetilde\phi'\Big( \frac{x+\mu(t)}{\la(t)} \Big) u^2(t,x)dx + \frac{1}{t\log^{1+\epsilon} t}\\
\lesssim &~{} \frac{1}{t}\int \phi'\Big( \frac{x+\mu(t)}{\la(t)} \Big) u^2(t,x)dx + \frac{1}{t\log^{1+\epsilon} t}.
\end{aligned}
\]
The rest of the proof is direct, see e.g. \cite{MuPo1}. The final conclusion follows from the fact that  $-\frac34\leq \frac{x+\mu(t)}{\la(t)} \leq -\frac14$ is equivalent to $x\sim -\mu(t)=-\la(t)$.

\subsection{Proof of Corollary \ref{TH4}} The proof is simple, just a modification of certain aspects of the proof of Theorem \ref{TH3}.

\medskip

Set now, for $t \geq 10$, and $\ve>0$, 
\be\label{la_new_new}
\la(t) := t \log^{1+\epsilon} t, \quad \mu(t):= t.
\ee
Note that $1/\la(t)$ is now integrable in $[10,\infty)$, and
\[
\frac{\la'(t)}{\la(t)}= \frac{\mu'(t)}{\mu(t)}\sim \frac1t, \quad \frac{\mu'(t)}{\la(t)} \sim \frac{1}{t\log^{1+\epsilon} t}.
\]
Recall that now $\frac{\mu'(t)}{\la(t)}$ \emph{is integrable} in $[2,\infty)$.

\medskip

\noindent
Now we consider the same modified functional $\mathcal J_e(t)$ as in \eqref{Je}. Choose $\phi \in C^\infty \cap L^\infty$ such that $\phi(x)\in [0,1]$ for all $x\in\R$,  $\phi(s)= 0$ if $s\leq 0$, $\phi(s)=1$ for $s\geq 1$, and $\phi'\geq 0$ in $\R$. 

\medskip

Notice that $\phi'\left(\frac{x+\mu(t)}{\la(t)} \right) \neq 0$ only in the region of $x\in\R$ such that  $0\leq \frac{x+\mu(t)}{\la(t)}\leq 1$, essentially  nonnegative. This region reads
\[
- t  \leq x \leq  t\log^{1+\epsilon} t - t,
\]
hence it contains the region where solitons exist. Therefore, for all $x\in\R,$
\[
 \left(\frac{x+\mu(t)}{\la(t)} \right)\phi'\Big( \frac{x+\mu(t)}{\la(t)} \Big)\geq 0.
\]
Proceeding as in the proof of Theorem \ref{TH3}, we obtain now the weaker condition
\be\label{PP}
\int_{10}^\infty \frac{1}{t}  \left( \int_{-t}^{t\log^{1+\epsilon} t-t}\left( \frac{x+\mu(t)}{\la(t)} \right)\phi'\Big( \frac{x+\mu(t)}{\la(t)} \Big) u^2(t,x)dx \right) dt<+\infty.
\ee
This proves \eqref{LL}.

\begin{rem}[Final remark]\label{Final}
Estimate \eqref{LL} also reveals that the choice of  $\tilde\phi$ in \eqref{tildephi} is in some sense \emph{necessary} for having truly decay. Indeed, from \eqref{PP}, having 
\[
\frac{x+\mu(t)}{\la(t)} \gtrsim \frac{1}{\log t}  \implies x \gtrsim - t + t \log^{\epsilon} t \sim t \log^{\epsilon} t ;
\]
and now \eqref{PP} becomes the more tractable integral estimate
\[
\int_{10}^\infty \frac{1}{t\log t}  \left( \int_{t \log^{\epsilon} t }^{t\log^{1+\epsilon} t-t}\phi'\Big( \frac{x+\mu(t)}{\la(t)} \Big) u^2(t,x)dx \right) dt<+\infty.
\]
Now this integral does not contain the bad term $\frac{x+\mu(t)}{\la(t)}$, but we have lost all the solitonic region in the estimate of integrability in time, a property that makes sense with Theorem \ref{TH3}. This remark is also valid for KdV, mKdV, BBM and BO.
\end{rem}

\bigskip

\end{document}